\newtheorem{theorem}{Theorem}[section]
\newtheorem{corollary}[theorem]{Corollary}
\newtheorem{lemma}[theorem]{Lemma}
\newtheorem{proposition}[theorem]{Proposition}
\theoremstyle{definition}
\theoremstyle{remark}
\DeclareMathOperator{\Z}{\mathbb{Z}}
\DeclareMathOperator{\R}{\mathbb{R}}
\DeclareMathOperator{\C}{\mathbb{C}}
\DeclareMathOperator{\Aut}{Aut}
\DeclareMathOperator{\e}{\mathrm{e}}
\DeclareMathOperator{\I}{\mathrm{I}}
\DeclareMathOperator{\m}{\mathrm{m}}
\DeclareMathOperator{\degg}{\mathrm{deg}}
\DeclareMathOperator{\cbc}{\mathrm{cbc}}
\DeclareMathOperator{\bc}{\mathrm{bc}}
\DeclarePairedDelimiterX\set[1]\lbrace\rbrace{\def\given{\;\delimsize\vert\;}#1}
\begin{document}
\title[Bartholdi zeta function formula for graphs with bounded degree]
{A generalized Bartholdi zeta function formula for simple graphs with bounded degree}

\author[T.~Kousaka]
{Taichi Kousaka}

\address{Graduate School of Mathematics, Kyushu University,
744 Motooka, Nishi-ku, Fukuoka, 819-0395 JAPAN}
\email{t-kosaka@math.kyushu-u.ac.jp}

\keywords{Bartholdi zeta function, Ihara zeta function, determinant formula, heat kernel, Bessel function.}
\subjclass[2010]{05C38, 05C63, 11M36}

\begin{abstract}
We introduce a generalized Bartholdi zeta function for simple graphs with bounded degree. 
This zeta function is a generalization of both the Bartholdi zeta function which was introduced by L.~Bartholdi and the Ihara zeta function which was introduced by G.~Chinta, J.~Jorgenson and A.~Karlsson. 
Furthermore, we establish a Bartholdi type formula of this Bartholdi zeta function for simple graphs with bounded degree. 
Moreover, for regular graphs, we give a new expression of the heat kernel which is regarded as a one-parameter deformation of the expression obtained by G.~Chinta, J.~Jorgenson and A.~Karlsson. 
By applying this formula, we give an alternative proof of the Bartholdi zeta function formula for regular graphs. 
\end{abstract}
\maketitle
\section{Introduction}
All graphs in this paper are assumed to be connected, countable and simple. 
Let $X$ be a graph with bounded degree and $\Delta_{X}$ be the combinatorial Laplacian of $X$. 
It is well-known that the spectrum of $\Delta_{X}$ is closely related to geometric properties and combinatorial properties of $X$ at least from the view point of graph theory, number theory and probability theory. 
Classically, it is important to study the relationship between closed paths in $X$ and the spectrum of $\Delta_{X}$. 
In this paper, we study the relationship from the view point of number theory. 

\medskip
Especially for a finite graph $X$, it is well-known that closed geodesics of $X$ are deeply related to the spectrum of $\Delta_{X}$ (cf.~\cite{Terras}). 
The relationship is described as the Ihara formula explicitly. 
The Ihara zeta function for a finite graph $X$ is defined by
\begin{align*}
Z_{X}(u)=\exp\bigg( \sum_{m=1}^{\infty}\frac{N_{m}}{m}u^{m} \bigg). 
\end{align*}
Here, $N_{m}$ stands for the number of closed geodesics of length $m$ in $X$. 
Then, the Ihara formula is described as follows (cf.~\cite{Terras}). 
\begin{align*}
Z_{X}(u)^{-1}=(1-u^{2})^{-\chi(X)} \det \big( I-u(D_{X}-\Delta_{X})+u^{2}(D_{X}-I) \big). 
\end{align*}
Here, $\chi(X)$ stands for the Euler characteristic of $X$ and $D_{X}$ stands for the valency operator of $X$. 
For finite regular graphs, the above formula was originally established by Y.~Ihara in the $p$-adic setting (\cite{YI1966}). 
Then, it has been generalized by T.~Sunada, K.~Hashimoto and H.~Bass (\cite{Bass1992}, \cite{HH89}, \cite{H89}, \cite{H90}, \cite{H92}, \cite{H93}, \cite{KS2000}, \cite{serre1}, \cite{sunada86}, \cite{sunada88}). 
When $X$ is regular, this formula gives an explicit relationship between the number of closed geodesic and the spectrum of $\Delta_{X}$. 

In $1999$, L.~Bartholdi introduced the Bartholdi zeta function for finite graphs and established a determinant expression of it (\cite{Bartholdi1999}). 
The Bartholdi zeta function is defined by 
\begin{align*}
Z_{X}(u,t)=\exp\bigg(\sum_{C \in \mathcal{C}} \frac{1}{\ell(C)}t^{\cbc(C)}u^{\ell(C)} \bigg). 
\end{align*}
Here, we denote by $\mathcal{C}$ the set of closed paths in $X$, by $\ell(C)$ the length of $C$ and by $\cbc(C)$ the cyclic bump count of a closed path $C$. 
This is a generalization of the Ihara zeta function by adding a variable $t$ which plays a role of counting back-trackings of a closed path. 
Indeed, if $t$ is equal to $0$, this zeta function coincides with the Ihara zeta function $Z_{X}(u)$. 
The determinant expression of $Z_{X}(u, t)$ is described as follows (\cite{Bartholdi1999}). 
\begin{align*}
Z_{X}(u, t)^{-1}&=\big(1-(1-t)^{2}u^{2}\big)^{-\chi(X)}\\
&\quad \times \det \big(I-u(D_{X}-\Delta_{X})+(1-t)u^{2}(D_{X}-(1-t)I) \big). 
\end{align*}
As in the case of the Ihara formula, when $X$ is regular, this formula gives the explicit relationship between the number of closed paths and the spectrum of $\Delta_{X}$. 

\medskip
Recently, several generalizations of the Ihara zeta function from finite graphs to infinite graphs have been considered (cf.~\cite{CJK1}, \cite{CMS2001}, \cite{Cla2009}, \cite{Deit2015}, \cite{GZ2004}, \cite{GIL2008a}, \cite{GIL2008b}, \cite{Sch1999}). 
In this paper, we follow \cite{CJK1} essentially. 
In 2017, the author introduced the Ihara zeta function for a graph $X$ with bounded degree as follows (\cite{K2017}). 
\begin{align*}
Z_{X}(u, x_{0})=\bigg( \sum_{m=1}^{\infty}\frac{N_{m}(x_{0})}{m}u^{m} \bigg). 
\end{align*}
Here, $N_{m}(x_{0})$ stands for the number of closed geodesics of length $m$ starting at $x_{0}$. 
If $X$ is vertex-transitive, this zeta function coincides with the Ihara zeta function which was introduced in \cite{CJK1}. 
In \cite{CJK1}, the definition of the Ihara zeta function for regular graphs is given (p.~185 in \cite{CJK1}). 
In general, however, when $X$ is regular, the Ihara zeta function in \cite{CJK1} does not always coincide with our Ihara zeta function. 
In \cite{CJK1}, G.~Chinta, J.~Jorgenson and A.~Karlsson established the Ihara type formula for the Ihara zeta function for vertex-transitive graphs by giving a new expression of the heat kernel (\cite{CJK1}). 
This definition works well in the point of studying deeply the relationship between closed geodesics and the spectrum of $\Delta_{X}$ and also as an analogy with heat kernel analysis of rank one symmetric spaces. 
After that, the author established the Ihara type formula for the Ihara zeta function for graphs with bounded degree (\cite{K2017}). 
His proof also gives an alternative proof of the formula for vertex-transitive graphs. 

\medskip
In this paper, we study the relationship between closed paths and the spectrum of $\Delta_{X}$ by introducing a Bartholdi zeta function for graphs with bounded degree. 
For a graph $X$ with bounded degree and a vertex $x_{0}$, a Bartholdi zeta function is defined by as follows in this paper. 
\begin{align*}
Z_{X}(u, t, x_{0})=\exp \bigg( \sum_{C \in \mathcal{C}_{x_{0}}}\frac{1}{\ell(C)}t^{\cbc(C)}u^{\ell(C)} \bigg). 
\end{align*}
Here, we denote by $\mathcal{C}_{x_{0}}$ the set of closed paths starting at $x_{0}$. 
We remark that we introduce a Bartholdi zeta function which is a generalization of the above. 
However, we do not introduce the definition in Introduction because the definition is a little technical in the sense of being based on a path counting formula. 
If $t$ is equal to $0$, this Bartholdi zeta function coincides with $Z_{X}(u, x_{0})$. 
If $X$ is a finite graph, by the definition of $Z_{X}(u, t, x_{0})$, the following equality holds. 
\begin{align*}
\prod_{x_{0} \in V\!X} Z_{X}(u, t, x_{0}) = Z_{X}(u, t). 
\end{align*}
In this sense, this Bartholdi zeta function is a generalization of the original one. 
Furthermore, we present a Bartholdi type formula for this Bartholdi zeta function. 
Especially for regular graphs, this formula describes the relationship between the number of closed paths and the spectrum of $\Delta_{X}$. 
We remark that for finite graphs, this formula can be regarded as a refined version of the original Bartholdi zeta function formula. 

Moreover, for (possibly infinite) regular graphs, we give a new expression of the heat kernel which is regarded as a one-parameter deformation of the expression obtained in \cite{CJK1}. 
By applying this formula, we give an alternative proof of the Bartholdi formula for regular graphs. 
We note that our heat kernel approach to the Bartholdi formula is new even for finite regular graphs. 
This is an important application of our new heat kernel expression. 
Many applications of the heat kernel are well-known. 
Therefore, in addition to the above application, we believe that there should be more applications by using our new expression of the heat kernel. 

\medskip
This paper is organized as follows. 
In Section $2$, we prepare some terminologies. 
In Section $3$, we give a path counting formula which is a generalization of the formula obtained in \cite{K2017}. 
In Section $4$, we give a Bartholdi type formula for our Bartholdi zeta function. 
In Section $5$, we give the Euler product expression of it. 
In Section $6$, we give a new expression of the heat kernel for a regular graph (not necessarily finite). 
In Section $7$, as an application of our heat kernel expression, we give an alternative proof of the Bartholdi type formula obtained in Section $4$. 
 
\section{Preliminaries}
\subsection{Graphs and Paths}
In this section, we give terminology of graphs and paths used throughout this paper (cf.~\cite{Bartholdi1999}, \cite{serre1}, \cite{sunada1}). 
A graph $X$ is an ordered pair $(V\!X, E\!X)$ of disjoint sets $V\!X$ and $E\!X$ with two maps, 
\begin{align*}
E\!X \rightarrow V\!X \times V\!X, e \mapsto (o(e), t(e)), \quad
E\!X \rightarrow E\!X , e \mapsto \bar{e}
\end{align*}
such that for each $e \in E\!X$, $\bar{e} \neq e$, $\bar{\bar{e}} = e$, $o(e)=t(\bar{e})$. 
For a graph $X=(V\!X, E\!X)$, two sets $V\!X$ and $E\!X$ are called vertex set and edge set respectively. 
A graph $X$ is {\it simple} if $X$ has no loops and multiple edges. 
For a vertex $x \in V\!X$, the {\it degree of $x$} is the cardinality of the set $E_{x}$, where $E_{x}=\{ e \in E\!X | o(e)=x\}$. We denote the degree of $x$ by $\degg(x)$. 
A graph $X$ is {\it countable} if the vertex set is countable. 
A graph $X$ {\it has bounded degree} if the supremum of the set of all degrees is not infinite. 
For a graph $X$, a {\it path of length $n$} is a sequence of edges
\begin{align*}
C=(e_{1}, \dots, e_{n})
\end{align*}
such that $t(e_{i})=o(e_{i+1})$ for each $i$. 
We denote $o(e_{1})$ by $o(C)$, $t(e_{n})$ by $t(C)$ and the length of $C$ by $\ell(C)$. 
A path $C$ is {\it closed} if $o(C)=t(C)$. 
We regard a vertex as a path of length $0$. 
A path $C=(e_{1}, \dots, e_{n})$ {\it has a back-tracking or bump} if there exist $i$ such that $e_{i+1}=\bar{e_{i}}$. A path $C=(e_{1}, \dots, e_{n})$ {\it has a tail} if $e_{n}=\bar{e_{1}}$. 
For a path $C=(e_{1}, \dots, e_{n})$, we define the {\it bump count} of $C$ as follows. 
\begin{align*}
\bc(C)=\sharp \set{ i \in \{1, \dots, n-1\} \given e_{i}=\overline{e_{i+1}} }. 
\end{align*}
For a closed path $C=(e_{1}, \dots, e_{n})$, we define the {\it cyclic bump count} of $C$ as follows. 
\begin{align*}
\cbc(C)=\sharp \set{ i \in \Z/m\Z \given e_{i}=\overline{e_{i+1}} }. 
\end{align*}
For a closed path $x_{0}$, we define $\bc(x_{0})=\cbc(x_{0})=0$. 
For a path $C=(e_{1}, \dots, e_{m})$, we denote $e_{i}$ by $e_{i}(C)$. 

\subsection{The Laplacian of a graph}
For the vertex set $V\!X$ of a graph $X$, we define {\it the $\ell^2$-space on the vertex set $V\!X$} by 
\begin{align*}
\ell^{2}(V\!X)=\set[\bigg]{ f \colon V\!X \rightarrow \C \given \sum_{x \in V\!X}|f(x)|^2 < +\infty }. 
\end{align*}
For a function $f \in \ell^2(V\!X)$ and a vertex $x \in V\!X$, we define the {\it adjacency operator} $A_{X}$ on $X$ and the {\it valency operator} $D_{X}$ on $X$ as follows respectively. 
\begin{align*}
(A_{X}f)(x)=\sum_{e \in E_{x}}f(t(e)), \\
(D_{X}f)(x)=\degg(x)f(x). 
\end{align*}
Then,  we define the {\it Laplacian} $D_{X}$ on $X$ by $\Delta_{X}=D_{X}-A_{X}$. 
The Laplacian is a semipositive and self-adjoint bounded operator if $X$ has bounded degree. 

\subsection{The heat kernel of a graph}
For a graph $X$ with bounded degree and a fixed vertex $x_{0}$, 
the {\it heat kernel} $K_{X}(\tau, x_{0}, x) \colon \R_{\geq 0} \times V\!X \to \R$ on $X$ is the solution of the heat equation 
\begin{align*}
	\left \{ \begin{array}{ll}
	\big(\Delta_{X}+\frac{\partial}{\partial \tau} \big)f(\tau, x)=0 ,\\
	f(0, x)=\delta_{x_{0}}(x). 
	\end{array} \right.
\end{align*}
Here, the function $f(\tau, x)$ is in the class $C^{1}$ on $\R \times V\!X$ for each $x \in V\!X$ and the function $\delta_{x_{0}}(x)$ is the Kronecker delta. 
The heat kernel on $X$ uniquely exists among functions which are bounded on $[0, T] \times V\!X$ for each $T \in \R_{\geq0}$ under our assumptions (\cite{dodziuk}). 
By the uniqueness of the solution of the heat equation, it turns out that the heat kernel $K_{X}(\tau, x_{0}, x)$ is an invariant under the automorphism group $\Aut(X)$. 

\subsection{The modified Bessel function}
In this section, we define the modified Bessel function and introduce some well-known properties of them. 
For $n \in \Z_{\geq 0}$ and $\tau \in \R$, we define the {\it modified Bessel function of the first kind} by the following power series. 
\begin{align*}
\I_{n}(\tau)=\sum_{m=0}^{\infty}\frac{\big( \tau/2 \big)^{n+2m}}{m! (m+n)!}. 
\end{align*}
For $-n \in \Z_{<0}$, we define $\I_{-n}(\tau)$ as follows. 
\begin{align*}
\I_{-n}(\tau)=\I_{n}(\tau). 
\end{align*}
It is well-known that $\I_{n}(\tau)$ is the power series solution of the following differential equation. 
\begin{align*}
\tau^{2} \frac{d^{2}w}{d\tau^{2}}+\tau \frac{dw}{d\tau}-(\tau^{2}+n^{2})w=0. 
\end{align*}
Moreover, it is also well-known that $\I_{n}(\tau)$ satisfies the following formula. 
\begin{align}\label{Bf}
2\frac{d}{d\tau}\I_{n}(\tau)=\I_{n-1}(\tau)+\I_{n+1}(\tau). 
\end{align}
In addition, for $n \geq 0$ and $\tau \in \R_{\geq 0}$, $\I_{n}(\tau)$ has the following trivial bound. 
\begin{align}\label{Bbound}
\I_{n}(\tau) \leq \bigg(\frac{\tau}{2}\bigg)^{n}\frac{\e^{\tau}}{n!}. 
\end{align}

\subsection{$G(t)$-transform}
For a real valued function $f(\tau)$$(0< \tau < \infty)$ which is integrable in every finite interval, we define $G(t)f$ as follows. 
\begin{align*}
G(t)f(u)=(u^{-2}-(q+t)(1-t))\int_{0}^{\infty}\e^{-\big( (q+t)(1-t)u+\frac{1}{u}-(q+1)\big)\tau}f(\tau)d\tau. 
\end{align*}
We call this transform {\it $G(t)$-transform}. 
The following formula holds (cf.~\cite{ob}). 
If $0 < u < \frac{1}{\sqrt{(q+t)(1-t)}}$, then, for $k \geq 0$, we have 
\begin{align}\label{G(t)f}
G(t)\big( \e^{-(q+1)\tau}\big((q+t)(1-t)\big)^{-\frac{k}{2}}\I_{k}\big(2\sqrt{(q+t)(1-t)}\tau\big)\big)(u)=u^{k-1}. 
\end{align}

\section{A generalized path counting formula}
In this section, we give a generalization of the path counting formula obtained by T. Kousaka (\cite{K2017}). 
First of all, we introduce several symbols. 
We take a vertex $x_{0}$ and $e \in E_{x_{0}}$. 
We denote by $\mathcal{C}_{x_{0}}$ the set of closed paths starting at $x_{0}$ and by $\mathcal{C}_{x_{0}}^{notail}$ the set of closed paths starting at $x_{0}$ which has no tail.  
For a complex variable $t$, we define $C_{m}(t, x_{0})$, $N_{m}(t, x_{0}, e)$ as follows. 
\begin{align*}
C_{m}(t, x_{0})=\sum_{C} t^{\cbc(C)}, \\
N_{m}(t, x_{0}, e)=\sum_{C} t^{\cbc(C)}. 
\end{align*}
Here, $C$ runs through $\mathcal{C}_{x_{0}}$ such that $\ell(C)=m$ in the first equality and $C$ runs through $\mathcal{C}_{x_{0}}^{notail}$ such that $e_{1}(C)=e, \ell(C)=m$ in the second equality. 
For $f \in \ell^{2}(V\!X)$ and $x \in V\!X$, we define $C_{m}(t)$ by 
\begin{align*}
C_{m}(t)f(x)=\sum_{C \in \mathcal{B}_{x}, \ell(C)=m}t^{\bc(C)}f(t(C)). 
\end{align*}
Here, we denote by $\mathcal{B}_{x}$ the set of paths starting at $x$. 
We define $C_{m}(t)(x_{0}, e)$ as follows. 
\begin{align*}
C_{m}(t)(x_{0}, e)=\sum_{C}t^{\bc(C)}. 
\end{align*}
Here, $C$ runs through $\mathcal{C}_{x_{0}}$ such that $e_{1}(C)=e, \ell(C)=m$ in the above equality.  
Moreover, we define $C_{m}(t)(x_{0}, \cdot, \bar{e})$, $N_{m}(t, x_{0}, \cdot, \bar{e})$ and $C_{m}(t)(x_{0}, e, \bar{e})$ as follows. 
\begin{align*}
C_{m}(t)( x_{0}, \cdot, \bar{e})=\sum_{C}t^{\bc(C)}, \\
N_{m}(t, x_{0}, \cdot, \bar{e})=\sum_{C}t^{\bc(C)}, \\
C_{m}(t)(x_{0}, e, \bar{e})=\sum_{C}t^{\bc(C)}. 
\end{align*}
Here, $C$ runs through $\mathcal{C}_{x_{0}}$ such that $e_{m}(C)=\bar{e}, \ell(C)=m$ in the first equality, $C$ runs through $\mathcal{C}_{x_{0}}^{notail}$ such that $e_{m}(C)=\bar{e}, \ell(C)=m$ in the second equality and $C$ runs through $\mathcal{C}_{x_{0}}$ such that $e_{1}(C)=e, e_{m}(C)=\bar{e}, \ell(C)=m$ in the third equality. 
We denote by $\mathcal{B}(\ell^{2}(V\!X))$ the set of bounded operators on $\ell^{2}(V\!X)$. 
We remark that $C_{m}(t)$ is in $\mathcal{B}(\ell^{2}(V\!X))$ for each $t$. 
For $B \in \mathcal{B}(\ell^{2}(V\!X))$ and $x_{1}, x_{2} \in V\!X$, we define $B(x_{1}, x_{2})$ as follows. 
\begin{align*}
B(x_{1}, x_{2})=B\delta_{x_{1}}(x_{2}). 
\end{align*}
Here, the symbol $\delta_{x_{0}}$ stands for the Kronecker delta. 
We define the following formal power series. 
\begin{align*}
C^{\cbc}(t, x_{0}: u)=\sum_{m=1}^{\infty}C_{m}(t, x_{0})u^{m}, \\
C(t, x_{0}: u)=\sum_{m=1}^{\infty}C_{m}(t)(x_{0}, x_{0})u^m, \\
N(t, x_{0}: u)=\sum_{m=1}^{\infty}N_{m}(t, x_{0})u^m.
\end{align*}
Here, we denote $\sum_{e \in E_{x_{0}}}N_{m}(t, x_{0}, e)$ by $N_{m}(t, x_{0})$. 
In addition to this, for a vertex $x \in V\!X$, we denote $\degg(x)C(t, x: u)-\sum_{e \in E_{x}}C(t, t(e): u)$ by $\Delta_{X}C(t, \cdot : u)(x)$ by regarding as an element of $\ell^{2}(V\!X)$ formally and for $m \geq 1$, $x \in V\!X$, we define $R_{m}(t)(x)$ as follows. 
\begin{align*}
R_{m}(t)(x)=\sum_{j=1}^{\lceil \frac{m}{2} \rceil -1}\sum_{i=1}^{j}(1-t)^{2(j-i)}(1-t^2)^{i-1}[\Delta_{X}C_{m-2j}(t)](x, x). 
\end{align*}

First of all, we prove the following proposition. 
\begin{proposition}\label{fNC}
For a vertex $x_{0}$, we have the following equality: 
\begin{align*}
\big(1-(1-t)^{2}u^{2}\big)N(t, x_{0}: u)
&=\big(1-(\degg(x_{0})-(1-t^{2}))u^{2}\big)C(t, x_{0}: u)\\
&\quad-\degg(x_{0})tu^{2}+\frac{u^{2}}{1-(1-t^{2})u^{2}}\Delta_{X}C(t, \cdot: u)(x_{0}). 
\end{align*}
Moreover, for $m \geq 3$, we have 
	\begin{align*}
	N_{m}(t, x_{0})&=C_{m}(t)(x_{0}, x_{0})
	-(\degg(x_{0})-2(1-t))
	\sum_{j=1}^{\lceil \frac{m}{2} \rceil -1}(1-t)^{2(j-1)}C_{m-2j}(t)(x_{0}, x_{0})\\
	&\quad+R_{m}(t)(x_{0})-\delta_{2\Z}(m)(1-t)^{m-2}t\degg(x_{0}). 
	\end{align*}
Here, we denote the ceiling function by $\lceil \cdot \rceil$. 
\end{proposition}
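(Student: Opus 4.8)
The plan is to relate the three families of counts by peeling edges off closed paths while tracking bump counts at the junctions, and then to solve the resulting relations at the level of the generating functions $N(t,x_{0}:u)$ and $C(t,x_{0}:u)$. The starting observation is the comparison of the cyclic and linear bump counts: for a closed path $C=(e_{1},\dots,e_{m})$ the cyclic count differs from the linear one only at the wrap-around position, so $\cbc(C)=\bc(C)+1$ when $C$ has a tail (that is, $e_{m}=\bar{e_{1}}$) and $\cbc(C)=\bc(C)$ otherwise. Hence a tailless path carries the same weight for both counts, which is why $N_{m}(t,x_{0})$ may be read as a linear-bump-count sum over tailless closed paths. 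Splitting all closed paths at $x_{0}$ into tailless ones and ones with a tail then gives $C_{m}(t)(x_{0},x_{0})=N_{m}(t,x_{0})+T_{m}$, where $T_{m}$ is the linear-bump-count sum over closed paths with a tail, and the whole problem becomes the evaluation of $T_{m}$.

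Next I would peel a tail. A closed path with a tail is $C=(e,C',\bar{e})$ with $e\in E_{x_{0}}$ and $C'$ a closed path at $t(e)$, empty only when $m=2$ (which contributes $\degg(x_{0})tu^{2}$ to the generating function), and with no contribution at $m=3$ since a simple graph has no loops. The linear bump count of $C$ is $\bc(C')$ together with a contribution $\mathbf 1[C'\text{ starts with }\bar e]$ at the left junction and $\mathbf 1[C'\text{ ends with }e]$ at the right junction. Writing $t^{\mathbf 1[\cdots]}=1+(t-1)\mathbf 1[\cdots]$ and expanding the product over the two junctions produces exactly the four refined counting functions defined above: the unconstrained sum $C_{m-2}(t)(t(e),t(e))$, the two singly-constrained sums (first edge $\bar e$, respectively last edge $e$), and the doubly-constrained sum $C_{m-2}(t)(t(e),\bar e,e)$, with coefficients $1$, $(t-1)$, $(t-1)$ and $(t-1)^{2}$.

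Then I would reduce the refined sums. Reversal of paths preserves the bump count and interchanges ``first edge $\bar e$'' with ``last edge $e$'', so the two singly-constrained generating functions coincide; this halves the unknowns and explains why a single factor $(1-t^{2})=(1-t)(1+t)$, rather than two independent corrections, governs the recursion. Peeling the constrained edge rewrites each refined sum through the operator kernels $C_{k}(t)(x_{0},t(e))$, and summing over $e\in E_{x_{0}}$ turns the leading unconstrained term into the Laplacian combination $\degg(x_{0})C(t,x_{0}:u)-\sum_{e}C(t,t(e):u)=\Delta_{X}C(t,\cdot:u)(x_{0})$. Assembling these pieces into $N(t,x_{0}:u)$ and $C(t,x_{0}:u)$ gives a closed relation among them; solving the resulting coupled relations produces the geometric series responsible for the rational factor $\tfrac{1}{1-(1-t^{2})u^{2}}$ on the Laplacian term and for the factor $1-(1-t)^{2}u^{2}$ multiplying $N$, which is the first displayed identity.

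Finally, the coefficient form for $m\ge 3$ follows by extracting the $u^{m}$-coefficient, expanding $\tfrac{1}{1-(1-t^{2})u^{2}}=\sum_{i\ge 0}(1-t^{2})^{i}u^{2i}$ to generate the double sum defining $R_{m}(t)$, and matching the even-length boundary term $-\delta_{2\Z}(m)(1-t)^{m-2}t\degg(x_{0})$ against the deepest level of the nested peeling, namely the maximally back-tracking contribution along a single edge. I expect the main obstacle to be precisely this junction bookkeeping combined with solving the coupled system for the refined functions, so that the algebraic factors $(1-t)^{2}$ and $(1-t^{2})$, the geometric series, and the small-length and parity corrections all emerge exactly; carefully handling the empty and loopless base cases and exploiting the reversal symmetry is what makes this step delicate rather than routine.
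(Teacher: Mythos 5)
Your outline reproduces the paper's own argument: split closed paths at $x_{0}$ by presence of a tail, peel the tail edge and track the two junction bumps via the four constrained sums with coefficients $1$, $(t-1)$, $(t-1)$, $(t-1)^{2}$ (the paper's identity (\ref{f1})), use reversal symmetry to merge the singly-constrained terms, iterate the self-referential doubly-constrained recursion with ratio $(t^{2}-1)$ to produce the factor $\frac{1}{1-(1-t^{2})u^{2}}$ and the factor $1-(1-t)^{2}u^{2}=$ part of $(1-2(1-t)u^{2}+(1-t^{2})(1-t)^{2}u^{4})/(1-(1-t^{2})u^{2})$, and finally extract coefficients to get $R_{m}(t)$ and the parity term from the $m=2$ base case. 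This is essentially the same proof as in the paper, with only cosmetic differences (e.g.\ writing $t^{\mathbf{1}}=1+(t-1)\mathbf{1}$ instead of the explicit four-case split).
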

\begin{proof}
First of all, we prove the first identity. 
For $m \geq 3$, $x_{0} \in V\!X$ and $e \in E_{x_{0}}$, we have 
\begin{align*}
C_{m}(t)(x_{0}, e, \bar{e})
&=\sum_{C=(e, e_{2}, \dots, e_{m-1}, \bar{e}), e_{2}=\bar{e}, e_{m-1} \neq e}t^{\bc(C)}
+\sum_{C=(e, e_{2}, \dots, e_{m-1}, \bar{e}), e_{2}=\bar{e}, e_{m-1} = e}t^{\bc(c)}\\
&\quad+\sum_{C=(e, e_{2}, \dots, e_{m-1}, \bar{e}), e_{2} \neq \bar{e}, e_{m-1} \neq e}t^{\bc(C)}
+\sum_{C=(e, e_{2}, \dots, e_{m-1}, \bar{e}), e_{2} \neq \bar{e}, e_{m-1} = e}t^{\bc(C)}. 
\end{align*}
Then, we have 
\begin{align*}
&C_{m}(t)(x_{0}, e, \bar{e})\\
&=t\big(C_{m-2}(t)(t(e), \bar{e})-C_{m-2}(t)(t(e), \bar{e}, e)\big)+t^{2}C_{m-2}(t)(t(e), \bar{e}, e)\\
&\quad+\big( C_{m-2}(t)(t(e))-C_{m-2}(t)(t(e), \bar{e})-C_{m-2}(t)(t(e), \cdot, e)+C_{m-2}(t)(t(e), \bar{e}, e) \big)\\
&\quad+t\big( C_{m-2}(t)(t(e), \cdot, e)-C_{m-2}(t)(t(e), \bar{e}, e) \big)\\
&=C_{m-2}(t)(t(e), t(e))+(t-1)C_{m-2}(t)(t(e), \bar{e})\\
&\quad+(t-1)C_{m-2}(t)(t(e), \cdot, e)+(t-1)^{2}C_{m-2}(t)(t(e), \bar{e}, e). 
\end{align*}
By this, we have 
\begin{align*}
&C_{m}(t)(x_{0}, e, \bar{e})\\
&=C_{m-2}(t)(t(e), t(e))
+(t-1)\big(C_{m-2}(t)(t(e), \bar{e})-N_{m-2}(t, t(e), \bar{e}) \big)\\
&\quad+(t-1)\big(C_{m-2}(t)(t(e), \cdot, e)-N_{m-2}(t, t(e), \cdot, e) \big)\\
&\quad+(t-1)\big(N_{m-2}(t, t(e), \bar{e})+N_{m-2}(t, t(e), \cdot, e) \big)
+(t-1)^{2}C_{m-2}(t)(t(e), \bar{e}, e)\\
&=C_{m-2}(t)(t(e), t(e))+2(t-1)N_{m-2}(t, t(e), \bar{e})+(t^2-1)C_{m-2}(t)(t(e), \bar{e}, e). 
\end{align*}
Therefore, we get 
\begin{align}\label{f1}
C_{m}(t)(x_{0}, e, \bar{e})
&=C_{m-2}(t)(t(e), t(e))+2(t-1)N_{m-2}(t, t(e), \bar{e}) \nonumber \\
&\quad+(t^2-1)C_{m-2}(t)(t(e), \bar{e}, e). 
\end{align}
In the case that $m \geq 5$,  by (\ref{f1}), we have 
\begin{align*}
&C_{m}(t)(x_{0}, e, \bar{e})\\
&=C_{m-2}(t)(t(e), t(e))+2(t-1)N_{m-2}(t, t(e), \bar{e})\\
   &\quad+(t^2-1)\bigg\{ C_{m-4}(t)(x_{0}, x_{0})+2(t-1)N_{m-4}(t, x_{0}, e)+(t^{2}-1)C_{m-4}(t)(x_{0}, e, \bar{e})\bigg\}. 
\end{align*}
Then, we have 
\begin{align}\label{f2}
&C_{m}(t)(x_{0}, x_{0})-N_{m}(t, x_{0})\nonumber \\
&=\big(\degg(x_{0})C_{m-2}(t)(x_{0}, x_{0})-\Delta_{X}C_{m-2}(t)(x_{0}, x_{0})\big)\nonumber \\
	&\quad+2(t-1)N_{m-2}(t, x_{0})+(t^{2}-1)\degg(x_{0})C_{m-4}(t)(x_{0}, x_{0})+(t^{2}-1)2(t-1)N_{m-4}(t, x_{0})\nonumber \\
	&\quad+(t^{2}-1)^{2}\big( C_{m-4}(t)(x_{0}, x_{0})-N_{m-4}(t, x_{0}) \big). 
\end{align}
Hence, for $m \geq 1$, we have the following. 
\begin{align*}
&N_{m+4}(t, x_{0})-2(1-t)N_{m+2}(t, x_{0})+(1-t^{2})(1-t)^{2}N_{m}(t, x_{0})\\
&=C_{m+4}(t)(x_{0}, x_{0})-\degg(x_{0})C_{m+2}(t)(x_{0}, x_{0})-(1-t^{2})^{2}C_{m}(t)(x_{0}, x_{0})\\
&\quad+\degg(x_{0})(1-t^{2})C_{m}(t)(x_{0}, x_{0})+\Delta_{X}C_{m+2}(t)(x_{0}, x_{0}). 
\end{align*}
It is easy to check that this implies the following desired identity: 
\begin{align*}
\big(1-(1-t)^{2}u^{2}\big)N(t, x_{0}: u)
&=\big(1-(\degg(x_{0})-(1-t^{2}))u^{2}\big)C(t, x_{0}: u)\\
&\quad-\degg(x_{0})tu^{2}+\frac{u^{2}}{1-(1-t^{2})u^{2}}\Delta_{X}C(t, \cdot: u)(x_{0}). 
\end{align*}
Next, we prove the second identity. 
For $m=3$, by (\ref{f1}), we have 
\begin{align*}
&N_{3}(t, x_{0})-(1-t)^{2}N_{1}(t, x_{0})\\
&=C_{3}(t)(x_{0}, x_{0})-\big( \degg(x_{0})-(1-t^{2}) \big)C_{1}(t)(x_{0}, x_{0})+\Delta_{X}C_{1}(t)(x_{0}, x_{0}). 
\end{align*}
Then, we have
\begin{align*}
N_{3}(t, x_{0})=C_{3}(t)(x_{0}, x_{0})-\big( \degg(x_{0})-2(1-t) \big)C_{1}(t)(x_{0}, x_{0})+\Delta_{X}C_{1}(t)(x_{0}, x_{0}). 
\end{align*}
For $m=4$, by (\ref{f1}), we have 
\begin{align*}
N_{4}(t, x_{0})
&=C_{4}(t)(x_{0}, x_{0})-(\degg(x_{0})-(1-t^{2}))C_{2}(t)(x_{0}, x_{0})\\
	&\quad+\Delta_{X}C_{2}(t)(x_{0}, x_{0})+(1-t)^{2}N_{2}(t, x_{0})\\
&=C_{4}(t)(x_{0}, x_{0})-(\degg(x_{0})-(1-t^{2}))C_{2}(t)(x_{0}, x_{0})\\
	 &\quad+\Delta_{X}C_{2}(t)(x_{0}, x_{0})+(1-t)^{2}(C_{2}(t)(x_{0}, x_{0})-t\degg(x_{0})) \\
&=C_{4}(t)(x_{0}, x_{0})-(\degg(x_{0})-2(1-t))C_{2}(t)(x_{0}, x_{0})\\
	 &\quad+\Delta_{X}C_{2}(t)(x_{0}, x_{0})-(1-t)^{2}t\degg(x_{0}). 
\end{align*}
Therefore, the second identity holds for $m=3, 4$. 
In the case $m \geq 5$, the identity (\ref{f2}) is equivalent to the following identity. 
\begin{align}\label{f3}
&N_{m}(t, x_{0})-(1-t)^{2}N_{m-2}(t, x_{0})-(1-t^{2})\big(N_{m-2}(t, x_{0})-(1-t)^{2}N_{m-4}(t, x_{0}) \big)\nonumber \\
&=C_{m}(t)(x_{0}, x_{0})-(1-t^{2})C_{m-2}(t)(x_{0}, x_{0})\nonumber \\
	&\quad-\big( \degg(x_{0}-(1-t^{2}) \big)\big(C_{m-2}(t)(x_{0}, x_{0})-(1-t^{2})C_{m-4}(t)(x_{0}, x_{0}) \big) \nonumber \\
	&\quad+\Delta_{X}C_{m-2}(t)(x_{0}, x_{0}). 
\end{align}
By summing the both sides of (\ref{f3}), we have the desired identity. 
\end{proof}

Next, we prove the following theorem that is our goal in this section. 
\begin{theorem}\label{cbc}
For a vertex $x_{0}$, we have the following identity: 
\begin{align*}
&\big(1-(1-t^{2})u^{2}\big)\big(1-(1-t)^{2}u^{2}\big)C^{\cbc}(t, x_{0}: u)\\
&=\big\{1-(1-t)(\degg(x_{0})-\Delta_{X}+2t)u^{2}+(1-t^{2})(1-t)\big(\degg(x_{0})-(1-t)\big)u^{4} \big\}C(t, \cdot: u)(x_{0})\\
	&\quad-\big(1-(1-t^{2})\big)t\degg(x_{0})(1-t)u^{2}. 
\end{align*}
Moreover, for $m \geq 3$, we have the following identity: 
\begin{align*}
C_{m}(t, x_{0})&=C_{m}(t)(x_{0}, x_{0})
	-\frac{\degg(x_{0})-2(1-t)}{1-t}\sum_{j=1}^{\lceil \frac{m}{2} \rceil-1}(1-t)^{2j}C_{m-2j}(t)(x_{0}, x_{0})\\
	&\quad+(1-t)R_{m}(t)(x_{0})-\delta_{2\Z}(m)(1-t)^{m-1}t\degg(x_{0}). 
\end{align*}
\end{theorem}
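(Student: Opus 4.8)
The plan is to reduce the whole statement to the already-proven Proposition~\ref{fNC} by isolating the one place where $\cbc$ and $\bc$ disagree. For a closed path $C=(e_{1},\dots ,e_{m})$ at $x_{0}$ the cyclic bump count runs over $i\in\Z/m\Z$ whereas the bump count runs over $i\in\{1,\dots ,m-1\}$, so the sole extra term is the wrap-around index $i=m$, which is counted exactly when $e_{m}=\overline{e_{1}}$, that is, exactly when $C$ has a tail. Thus $\cbc(C)=\bc(C)$ on tailless closed paths and $\cbc(C)=\bc(C)+1$ on the others.

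First I would split the defining sum of $C_{m}(t,x_{0})$ accordingly. The tailless closed paths of length $m$ at $x_{0}$ contribute $N_{m}(t,x_{0})$, while the closed paths with a tail contribute $t\,(C_{m}(t)(x_{0},x_{0})-N_{m}(t,x_{0}))$, since $C_{m}(t)(x_{0},x_{0})$ is the $t^{\bc}$-weighted count of all closed paths from $x_{0}$ to itself and each tailed path carries one extra power of $t$. This gives the key relation
\begin{align*}
C_{m}(t,x_{0})=t\,C_{m}(t)(x_{0},x_{0})+(1-t)\,N_{m}(t,x_{0})\qquad(m\geq 1),
\end{align*}
or, in generating functions, $C^{\cbc}(t,x_{0}\colon u)=t\,C(t,x_{0}\colon u)+(1-t)\,N(t,x_{0}\colon u)$; the length-$2$ paths $(e,\overline{e})$, for which $\cbc=2$ and $\bc=1$, already confirm the bookkeeping.

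For the first identity I would multiply the generating-function relation by $(1-(1-t^{2})u^{2})(1-(1-t)^{2}u^{2})$ and insert the first identity of Proposition~\ref{fNC} in place of $(1-(1-t)^{2}u^{2})N(t,x_{0}\colon u)$. The surviving factor $(1-(1-t^{2})u^{2})$ is precisely what clears the denominator in front of the Laplacian term, turning it into $(1-t)u^{2}\Delta_{X}C(t,\cdot\colon u)(x_{0})$, which is the source of the $-\Delta_{X}$ inside the bracket. Collecting the coefficient of $C(t,x_{0}\colon u)$ then reduces, via $1-t^{2}=(1-t)(1+t)$, to checking that $t(1-(1-t)^{2}u^{2})+(1-t)(1-(\degg(x_{0})-(1-t^{2}))u^{2})$ equals $1-(1-t)(\degg(x_{0})-(1-t))u^{2}$; multiplying this by $(1-(1-t^{2})u^{2})$ reproduces the stated quadratic and quartic terms, while the leftover scalar $-(1-t)(1-(1-t^{2})u^{2})\degg(x_{0})tu^{2}$ gives the final term.

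For the second identity I would instead feed the second identity of Proposition~\ref{fNC} into the coefficientwise relation $C_{m}(t,x_{0})=t\,C_{m}(t)(x_{0},x_{0})+(1-t)N_{m}(t,x_{0})$. Here $t\,C_{m}(t)(x_{0},x_{0})+(1-t)C_{m}(t)(x_{0},x_{0})=C_{m}(t)(x_{0},x_{0})$ produces the leading term, the factor $(1-t)$ multiplies the sum, the term $R_{m}(t)(x_{0})$ and the $\delta_{2\Z}(m)$ term, and the identities $(1-t)(1-t)^{2(j-1)}=(1-t)^{2j}/(1-t)$ and $(1-t)(1-t)^{m-2}=(1-t)^{m-1}$ convert the remaining factors into exactly the claimed shapes. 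The only genuinely delicate step is the first one: matching the $\cbc$/$\bc$ discrepancy with the tail condition and verifying that the tailless closed paths are counted precisely by $N_{m}(t,x_{0})$. Once that is in place, both identities are pure algebra on top of Proposition~\ref{fNC}.
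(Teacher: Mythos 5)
Your proposal is correct and follows essentially the same route as the paper: the identity $\cbc(C)=\bc(C)+1$ exactly on tailed closed paths gives $C_{m}(t,x_{0})=tC_{m}(t)(x_{0},x_{0})+(1-t)N_{m}(t,x_{0})$, and both identities then follow from Proposition \ref{fNC} by the algebra you describe. (Your computation also produces the final scalar term as $-\big(1-(1-t^{2})u^{2}\big)t\degg(x_{0})(1-t)u^{2}$, which indicates that the factor $\big(1-(1-t^{2})\big)$ in the printed statement is missing a $u^{2}$.)
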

\begin{proof}
For $m \geq 1$, $C_{m}(t, x_{0})-N_{m}(t, x_{0})$ ( resp.~$C_{m}(t)(x_{0}, x_{0})-N_{m}(t, x_{0})$ )  represents the number of closed paths with weight $t^{\cbc(\cdot)}$ ( resp.~$t^{\bc(\cdot)}$ ) of length $m$ starting at $x_{0}$, which have no tail. 
Hence, we have 
\begin{align*}
C_{m}(t, x_{0})-N_{m}(t, x_{0})=t\big(C_{m}(t)(x_{0}, x_{0})-N_{m}(t, x_{0})\big). 
\end{align*}
Then, we have 
\begin{align*}
C^{\cbc}(t, x_{0}: u)=tC(t, c_{0}: u)+(1-t)N(t, x_{0}: u). 
\end{align*}
By this and Proposition \ref{fNC}, we have 
\begin{align*}
&\big(1-(1-t)^{2}u^{2}\big)C^{\cbc}(t, x_{0}: u)\\
&=\bigg( (1-t)(1-\big(\degg(x_{0})-(1-t^{2})\big)u^{2})+t(1-(1-t)^{2}u^{2})\bigg)C(t, x_{0}:u)\\
	&\quad -(1-t)t\degg(x_{0})u^{2}+\frac{(1-t)u^{2}}{1-(1-t^{2})u^{2}}\Delta_{X}C(t, \cdot :u)(x_{0}). 
\end{align*}
By simple calculation, this implies the first equality. 
Next, we verify the second equality. 
By Proposition \ref{fNC}, for $m \geq 3$, we have 
\begin{align*}
C_{m}(t, x_{0})
&=t\big(C_{m}(t)(x_{0}, x_{0})-N_{m}(t, x_{0}) \big)\\
&\hspace{0.5cm}+\big(N_{m}(t, x_{0})-C_{m}(t)(x_{0}, x_{0})\big)+C_{m}(t)(x_{0}, x_{0}) \\
&=C_{m}(t)(x_{0}, x_{0})-(1-t)\big(C_{m}(t)(x_{0}, x_{0})-N_{m}(t, x_{0}) \big)\\
&=C_{m}(t)(x_{0}, x_{0}) \\
&\quad-(1-t)(\degg(x_{0})-2(1-t))\sum_{j=1}^{\lceil \frac{m}{2} \rceil -1}(1-t)^{2(j-1)}C_{m-2j}(t)(x_{0}, x_{0})\\
&\quad+(1-t)R_{m}(t)(x_{0})-\delta_{2\Z}(m)(1-t)^{m-1}t \degg(x_{0}). 
\end{align*}
\end{proof}

In the end of this section, we note generating functions which we defined in this section. 
We define the generating function of $R_{m}(t)(x_{0})$ as follows. 
\begin{align*}
R(t, x_{0}: u)=\sum_{m=1}^{\infty}R_{m}(t)(x_{0})u^{m}. 
\end{align*}
It is straightforward to check that the following holds. 
If $\left| t \right| <1$, $t \neq 0$ and $\left| u \right| < \frac{1}{2}$, then, we have 
\begin{align*}
R(t, x_{0}: u)=\frac{u^{2}}{\big(1-(1-t)^{2}u^{2}\big)\big(1-(1-t^{2})u^{2}\big)}\Delta_{X}C(t, \cdot: u)(x_{0}). 
\end{align*}
Therefore, all generating functions which we defined in this section are expressed by $C(t, x_{0}: u)$. 
We note that the above formula holds for $t=0$. 

\section{A generalized Bartholdi zeta function formula for simple graphs with bounded degree}
In this section, we introduce a Bartholdi zeta function for a graph with bounded degree. 
This zeta function is a generalization of the Bartholdi zeta function from a finite graph to a graph with bounded degree (\cite{Bartholdi1999}). 

First of all, we define a Bartholdi zeta function for a graph with bounded degree. 
For a graph $X$ with bounded degree, a vertex $x_{0}$ and complex variables $t$, $u$, we define a {\it Barholdi zeta function} as follows. 
\begin{align*}
Z_{X}(u, t, x_{0})=\exp \bigg( \sum_{C \in \mathcal{C}_{x_{0}}}\frac{1}{\ell(C)}t^{\cbc(C)}u^{\ell(C)} \bigg). 
\end{align*}
This is a natural generalization of the Ihara zeta function for a graph with bounded degree which was  introduced in \cite{CJK1} in the spirit of L.~Bartholdi although he introduced by using the Euler product expression. 
Before we give an Ihara type formula for this zeta function, we define several operators and give several properties of $C_{m}(t)$.  
We define $I(t)$, $Q_{X}$ and $Q_{X}(t)$ as follows. 
\begin{align*}
I(t)&=(1-t)I, \\
Q_{X}&=D_{X}-I, \\
Q_{X}(t)&=D_{X}-I(t). 
\end{align*}
Then, we have the following proposition. 
\begin{proposition}\label{Cm}
For $m \geq 2$, we have 
\begin{align*}
C_{m}(t)=\left \{ \begin{array}{ll}
			C_{1}(t)^{2}-(1-t)(Q_{X}+I)		& \text{if $m=2$}, \\
			C_{m-1}(t)C_{1}(t)-(1-t)C_{m-2}(t)Q_{X}(t)	& \text{if $m\geq 3$}.\\
			\end{array} \right. 
\end{align*}
\end{proposition}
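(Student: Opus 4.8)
The plan is to establish both cases by peeling the last edge off a path of length $m$ and comparing the weight $t^{\bc(\cdot)}$ carried by $C_m(t)$ with the weight produced by the composition $C_{m-1}(t)C_1(t)$. First I would record the base fact that a path of length $1$ carries no bump, so $C_1(t)f(x)=\sum_{e\in E_x}f(t(e))=(A_Xf)(x)$; that is, $C_1(t)=A_X$ independently of $t$. Throughout I write $[P]$ for the indicator of a condition $P$.

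Writing a path of length $m$ starting at $x$ as a path $(e_1,\dots,e_{m-1})$ of length $m-1$ extended by an edge $e_m\in E_{t(e_{m-1})}$, the bump count is additive in the obvious way, $\bc(e_1,\dots,e_m)=\bc(e_1,\dots,e_{m-1})+[e_{m-1}=\overline{e_m}]$. Unwinding the definitions, $\bigl(C_{m-1}(t)C_1(t)f\bigr)(x)$ is literally the same sum over length-$m$ paths as $\bigl(C_m(t)f\bigr)(x)$, except that it carries the weight $t^{\bc(e_1,\dots,e_{m-1})}$ in place of $t^{\bc(e_1,\dots,e_m)}$. Hence the two agree on every non-backtracking extension and differ only on the paths with $e_m=\overline{e_{m-1}}$, where the weights differ by the factor $(t-1)$:
\[
C_m(t)f(x)-\bigl(C_{m-1}(t)C_1(t)f\bigr)(x)=(t-1)\sum_{\substack{(e_1,\dots,e_m)\\ e_m=\overline{e_{m-1}}}}t^{\bc(e_1,\dots,e_{m-1})}f(t(e_m)).
\]

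The crux is to recognize this correction as an operator. On such a folded path $t(e_m)=o(e_{m-1})$. For $m\ge 3$ this equals $t(e_{m-2})$, so I would decompose $(e_1,\dots,e_{m-1})$ once more as $(e_1,\dots,e_{m-2})$ followed by a free edge $e_{m-1}\in E_{t(e_{m-2})}$; summing $t^{[e_{m-2}=\overline{e_{m-1}}]}$ over the $\degg(t(e_{m-2}))$ choices of $e_{m-1}$, exactly one of which (namely $\overline{e_{m-2}}$) is a backtrack, produces the scalar $\degg(t(e_{m-2}))-(1-t)$. Recognizing the multiplication operator $f\mapsto(\degg(\cdot)-(1-t))f$ as $Q_X(t)=D_X-I(t)$, the correction equals $(t-1)\bigl(C_{m-2}(t)Q_X(t)f\bigr)(x)$, which is the asserted recursion for $m\ge3$. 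For $m=2$ the fold returns to the base vertex, $t(e_2)=o(e_1)=x$, and there is no intermediate edge left to sum over: the correction collapses directly to $(t-1)\sum_{e_1\in E_x}f(x)=-(1-t)\degg(x)f(x)=-(1-t)(D_Xf)(x)$, and since $Q_X+I=D_X$ this is exactly the claimed $m=2$ formula.

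I expect the only genuine care to lie in the bookkeeping of the bump count across the two successive decompositions, and in noting that the case $m=2$ truly separates: there the backtracking leaves no free intermediate edge to sum over, so one lands on $D_X=Q_X+I$ rather than on $Q_X(t)$, which is precisely the reason the two branches of the statement take different forms.
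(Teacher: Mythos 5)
Your proof is correct and follows essentially the same route as the paper: both compare $C_{m-1}(t)C_1(t)$ with $C_m(t)$ by examining whether the last edge backtracks, and both identify the resulting correction as $-(1-t)C_{m-2}(t)Q_X(t)$ (respectively $-(1-t)D_X$ when $m=2$). Your write-up is in fact more explicit than the paper's about the bump-count bookkeeping and about why the $m=2$ case lands on $Q_X+I=D_X$ instead of $Q_X(t)$.
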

We give the proof of this proposition although this proposition was proved in \cite{IS2009} because our proof is a little different from \cite{IS2009}. 
\begin{proof}
It is enough to show that for $x_{0}, x \in V\!X$, 
\begin{align*}
C_{m}(t)(x_{0}, x)=\left \{ \begin{array}{ll}
			\big( C_{1}(t)^{2}-(1-t)(Q_{X}+I)\big)(x_{0}, x)	& \text{if $m=2$},\\
			\big( C_{m-1}(t)C_{1}(t)-(1-t)C_{m-2}(t)Q_{X}(t)\big)(x_{0}, x)	& \text{if $m\geq 3$}. \\
			\end{array} \right. 
\end{align*}
For $m=2$, it is obvious. 
In the case $m \geq 3$, for $x_{0}, x \in V\!X$, we have 
\begin{align*}
C_{m-1}(t)C_{1}(t)(x_{0}, x)
=C_{m-1}(t)C_{1}(t)\delta_{x_{0}}(x)
=\sum_{C \in \mathcal{B}_{x}, \ell(C)=m-1}\sum_{e \in E_{t(C)}^{x_{0}}}t^{\bc(C)}. 
\end{align*}
By considering whether a path has backtracking at the last step and comparing $C_{m-1}(t)C_{1}(t)(x_{0}, x)$ to $C_{m}(t)(x_{0}, x)$, we have 
\begin{align*}
&C_{m-1}(t)C_{1}(t)(x_{0}, x)
-C_{m-2}(t)(x_{0}, x)t-C_{m-2}(t)(x_{0}, x)(\degg(x_{0})-1)\\
&\quad+C_{m-2}(t)(x_{0}, x)t^{2}+C_{m-2}(t)(x_{0}, x)(\degg(x_{0})-1)t\\
&=C_{m}(t)(x_{0}, x). 
\end{align*}
Therefore, we have 
\begin{align*}
C_{m}(t)&=C_{m-1}(t)C_{1}(t)-C_{m-2}(t)t-C_{m-2}(t)Q_{X}+C_{m-2}(t)t^{2}+C_{m-2}(t)Q_{X}t\\
&=C_{m-1}(t)C_{1}(t)-(1-t)C_{m-2}(t)Q_{X}(t). 
\end{align*}
\end{proof}

For a complex valuable $t$, we define $\alpha(t)$ by 
\begin{align*}
\alpha(t)=\frac{M+\sqrt{M^{2}+4(\left| t \right|+1)M}}{2}. 
\end{align*}
Here, we denote the maximum of all degrees of $X$ by $M$. 
Then, we have the following Lemma. 
\begin{lemma}\label{at}
For $\left| t \right| <1$, then for $m \geq 0$, we have 
\begin{align*}
\lVert C_{m}(t) \rVert  \leq \alpha(t)^{m}. 
\end{align*}
\end{lemma}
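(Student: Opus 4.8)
The plan is to argue by strong induction on $m$, feeding in the two-term recursion of Proposition \ref{Cm}. The key point is that $\alpha(t)$ is manufactured precisely so that the induction closes: it is the positive root of the quadratic $x^{2}-Mx-(\left| t \right|+1)M=0$, so that
\begin{align*}
\alpha(t)^{2}=M\alpha(t)+(\left| t \right|+1)M, \qquad \alpha(t) \geq M.
\end{align*}
The inequality $\alpha(t) \geq M$ is immediate from the explicit formula, since the discriminant $M^{2}+4(\left| t \right|+1)M$ is at least $M^{2}$. These two facts are the only properties of $\alpha(t)$ I will need.

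First I would record the operator norms of the building blocks in the recursion. One checks directly from the definition that $C_{0}(t)=I$ and $C_{1}(t)=A_{X}$, so $\lVert C_{0}(t) \rVert = 1$ and $\lVert C_{1}(t) \rVert = \lVert A_{X} \rVert \leq M$ (the standard bound for the adjacency operator of a graph of maximal degree $M$). Since $D_{X}$ is the diagonal multiplication operator by $\degg(\cdot)$, we have $\lVert Q_{X}+I \rVert = \lVert D_{X} \rVert = M$ and, using $\degg(x) \geq 1$,
\begin{align*}
\lVert Q_{X}(t) \rVert = \sup_{x}\left| \degg(x)-(1-t) \right| \leq (M-1)+\left| t \right|.
\end{align*}
The base cases $m=0$ and $m=1$ now hold, the latter because $M \leq \alpha(t)$.

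For the inductive step with $m \geq 3$, I would apply the triangle inequality and submultiplicativity of the operator norm to $C_{m}(t)=C_{m-1}(t)C_{1}(t)-(1-t)C_{m-2}(t)Q_{X}(t)$, together with the induction hypotheses $\lVert C_{m-1}(t)\rVert \leq \alpha(t)^{m-1}$ and $\lVert C_{m-2}(t)\rVert \leq \alpha(t)^{m-2}$, obtaining
\begin{align*}
\lVert C_{m}(t) \rVert \leq M\alpha(t)^{m-1}+\left| 1-t \right|\big((M-1)+\left| t \right|\big)\alpha(t)^{m-2}.
\end{align*}
Dividing by $\alpha(t)^{m-2}$ and invoking $\alpha(t)^{2}=M\alpha(t)+(\left| t \right|+1)M$, the desired bound $\lVert C_{m}(t)\rVert \leq \alpha(t)^{m}$ reduces to the scalar inequality $\left| 1-t \right|\big((M-1)+\left| t \right|\big) \leq (\left| t \right|+1)M$. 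The case $m=2$ is handled in the same way from $C_{2}(t)=C_{1}(t)^{2}-(1-t)(Q_{X}+I)$, which gives $\lVert C_{2}(t)\rVert \leq M^{2}+\left| 1-t \right|M$, at most $M\alpha(t)+(\left| t \right|+1)M=\alpha(t)^{2}$ by $M \leq \alpha(t)$ and $\left| 1-t \right| \leq \left| t \right|+1$.

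The only real content is the final scalar inequality, and it is exactly where the hypothesis $\left| t \right| < 1$ enters. Since $\left| 1-t \right| \leq 1+\left| t \right|$, and since $\left| t \right| \leq 1$ forces $(M-1)+\left| t \right| \leq M$, multiplying these two nonnegative bounds yields $\left| 1-t \right|\big((M-1)+\left| t \right|\big) \leq (1+\left| t \right|)M$, as needed. I expect the only subtle point to be the recognition that $\alpha(t)$ must be the root of precisely the quadratic $x^{2}-Mx-(\left| t \right|+1)M$, so that the $m=2$ and $m \geq 3$ estimates both close against the single identity $\alpha(t)^{2}=M\alpha(t)+(\left| t \right|+1)M$; everything else is a routine application of the triangle inequality and submultiplicativity to Proposition \ref{Cm}.
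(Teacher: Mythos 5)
Your proof is correct and follows essentially the same route as the paper: strong induction on $m$ via the recursion of Proposition \ref{Cm}, the triangle inequality and submultiplicativity, and the fact that $\alpha(t)$ satisfies $\alpha(t)^{2}=M\alpha(t)+(1+\left|t\right|)M$. The only (welcome) difference is that you treat the case $m=2$ explicitly with its separate recursion $C_{2}(t)=C_{1}(t)^{2}-(1-t)(Q_{X}+I)$, which the paper's induction glosses over.
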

\begin{proof}
We prove this by induction on $m$. 
For $m=0, 1$, there is nothing to do. 
We suppose that our assertion holds for $m-1$. Then, we have 
\begin{align*}
\lVert C_{m}(t) \rVert 
&=\lVert C_{m-1}(t)C_{1}(t)-(1-t)C_{m-2}(t)Q_{X}(t) \rVert \\
& \leq M \alpha(t)^{m-1}+(1+\left| t \right|)\alpha(t)^{m-2}(M-1+\left| t \right|)\\
&=\alpha(t)^{m-2}\big\{ \alpha(t)M+(1+\left| t \right|)(M-1+\left| t \right|) \big\} \\
&=\alpha(t)^{m-2}(\alpha(t)^{2}+\left| t \right|^{2}-1)< \alpha(t)^{m}. 
\end{align*}
\end{proof}
By Proposition \ref{Cm} and Lemma \ref{at}, we have the following proposition. 
\begin{proposition}\label{fC}
For $\left| u \right| < \frac{1}{\alpha(t)}$, $\left| t \right| <1$, we have 
\begin{align*}
	\bigg(\sum_{m=0}^{\infty}C_{m}(t)u^m\bigg)\bigg(I-uA_{X}+(1-t)Q_{X}(t)u^{2}\bigg)
	=(1-(1-t)^2 u^{2})I, \\
	\bigg(\sum_{m=0}^{\infty}
	\big(\sum_{j=0}^{\lfloor \frac{m}{2} \rfloor}C_{m-2j}(t)(1-t)^{2j}\big)u^{m}\bigg)
	\bigg(I-uA_{X}+(1-t)Q_{X}(t)u^{2} \bigg)=I. 
\end{align*}
\end{proposition}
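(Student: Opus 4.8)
The plan is to read both displayed equalities as identities of operator-valued power series in $u$, established by matching coefficients and invoking the recursion of Proposition~\ref{Cm}. First I would record the two initial data that make the recursion usable: by the convention that a vertex is a path of length $0$ we have $C_{0}(t)=I$, and since a path of length one carries no bump, $C_{1}(t)(x_{0},x)=\sum_{e\in E_{x_{0}}}\delta_{t(e)}(x)$, so that $C_{1}(t)=A_{X}$. By Lemma~\ref{at} the series $F(u):=\sum_{m=0}^{\infty}C_{m}(t)u^{m}$ converges in operator norm whenever $\lvert u\rvert<1/\alpha(t)$, because $\lVert F(u)\rVert\leq\sum_{m}\alpha(t)^{m}\lvert u\rvert^{m}<\infty$; this legitimizes all the rearrangements below in the stated region.

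For the first identity I would expand
\[
F(u)\big(I-uA_{X}+(1-t)Q_{X}(t)u^{2}\big)
\]
and read off the coefficient of $u^{n}$, which equals $C_{n}(t)-C_{n-1}(t)A_{X}+(1-t)C_{n-2}(t)Q_{X}(t)$ (terms with negative index being dropped). Using $A_{X}=C_{1}(t)$, for $n\geq 3$ this is precisely the right-hand side of the recursion $C_{n}(t)=C_{n-1}(t)C_{1}(t)-(1-t)C_{n-2}(t)Q_{X}(t)$, hence vanishes. The coefficient of $u^{0}$ is $C_{0}(t)=I$, and that of $u^{1}$ is $C_{1}(t)-C_{0}(t)A_{X}=0$. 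The only surviving low-order term is the coefficient of $u^{2}$, namely $C_{2}(t)-C_{1}(t)^{2}+(1-t)Q_{X}(t)$; substituting $C_{2}(t)=C_{1}(t)^{2}-(1-t)(Q_{X}+I)$ together with $Q_{X}+I=D_{X}$ and $Q_{X}(t)=D_{X}-(1-t)I$ collapses this to $-(1-t)^{2}I$. This yields the first equality.

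The second identity I would deduce from the first. Writing the inner index as $k=m-2j$, the second operator series factors as a Cauchy product
\[
\sum_{m=0}^{\infty}\Big(\sum_{j=0}^{\lfloor m/2\rfloor}C_{m-2j}(t)(1-t)^{2j}\Big)u^{m}
=\Big(\sum_{k=0}^{\infty}C_{k}(t)u^{k}\Big)\Big(\sum_{j=0}^{\infty}(1-t)^{2j}u^{2j}\Big)
=\frac{F(u)}{1-(1-t)^{2}u^{2}},
\]
so multiplying the first identity by the scalar $\big(1-(1-t)^{2}u^{2}\big)^{-1}$ gives the second. The step that genuinely requires care is convergence, since the factorization above rests on the absolute convergence of both factors. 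Here I would check that $\lvert u\rvert<1/\alpha(t)$ and $\lvert t\rvert<1$ force $\lvert(1-t)u\rvert<1$: because $\alpha(t)$ is the positive root of $x^{2}-Mx-(\lvert t\rvert+1)M$ and $(1+\lvert t\rvert)^{2}-M(1+\lvert t\rvert)-(\lvert t\rvert+1)M=(1+\lvert t\rvert)\big((1+\lvert t\rvert)-2M\big)<0$ for $M\geq 1$ and $\lvert t\rvert<1$, one gets $\alpha(t)>1+\lvert t\rvert\geq\lvert 1-t\rvert$, so the geometric series converges in the given disc. This analytic bookkeeping is the main obstacle; the rest is the routine coefficient matching against Proposition~\ref{Cm}.
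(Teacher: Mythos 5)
Your proof is correct and follows exactly the route the paper intends: the paper states Proposition~\ref{fC} without proof as an immediate consequence of Proposition~\ref{Cm} and Lemma~\ref{at}, and your coefficient matching (including the $u^{2}$ term collapsing to $-(1-t)^{2}I$ via $Q_{X}+I=D_{X}$ and $Q_{X}(t)=D_{X}-(1-t)I$) together with the geometric-series factorization and the check that $\alpha(t)>1+\lvert t\rvert$ supplies precisely the details the paper omits. No gaps.
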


Next, for $m \leq 0$, $t$, $x \in V\!X$ and $f \in \ell^{2}(V\!X)$, we define an operator $R_{m}(t)$ by 
\begin{align*}
R_{m}(t)f(x)=R_{m}(t)(x)f(x). 
\end{align*}
Then, we define an operator $C_{m}^{\cbc}(t)$ like the operator $N_{X, m}$ introduced in \cite{K2017} by 
\begin{align*}
C_{m}^{\cbc}(t)=\left \{ \begin{array}{lll}
			C_{m}(t)	& \text{if $m=0, 1$}, \\
			tC_{2}(t)   & \text{if $m=2$}, \\
			C_{m}(t)-\frac{Q_{X}(t)-I(t)}{1-t}
			\sum_{j=1}^{\lceil \frac{m}{2} \rceil -1}(1-t)^{2j}C_{m-2j}(t)\\
			+(1-t)R_{m}(t)-\delta_{2\Z}(m)(1-t)^{m-1}tD_{X}	& \text{if $m \geq 3$}.\\
			\end{array} \right. 
\end{align*}
We remark that this operator is also a bounded operator by our assumption. 
We also remark that $C_{m}^{\cbc}(t)(x_{0}, x_{0})=C_{m}(t, x_{0})$ and the following identity holds by Theorem \ref{cbc}. 
\begin{align*}
Z_{X}(u, t, x_{0})=\exp\bigg( \sum_{m=1}^{\infty}\frac{1}{m}C_{m}(t, x_{0})u^{m} \bigg). 
\end{align*}
Therefore, for $x_{0}, x \in V\!X$, we define $Z_{X}(u, t, x_{0}, x)$ as follows. 
\begin{align*}
Z_{X}(u, t, x_{0}, x)=\exp\bigg( \sum_{m=1}^{\infty}\frac{C_{m}^{\cbc}(t)(x_{0}, x)}{m}u^{m}\bigg). 
\end{align*}
Moreover, we define $f(z)$ as follows. 
\begin{align*}
f(z)=zA_{X}-z^{2}(1-t)Q_{X}(t). 
\end{align*}
Then, we have the following Proposition (\cite{K2017}). 
\begin{proposition}\label{f(u)}
For $\left| u \right| < \frac{1}{\alpha(t)}$, we have 
\begin{align*}
f'(u)(I-f(u))^{-1}
&=-\frac{d}{du}\log(I-f(u))\\
	&\quad+(1-t)u^2\sum_{n=1}^{\infty}\frac{1}{n}\sum_{j=1}^{n-1}jf(u)^{n-1-j}(A_{X}Q_{X}(t)-Q_{X}(t)A_{X})f(u)^{j-1}. 
\end{align*}
\end{proposition}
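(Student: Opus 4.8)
The plan is to read both sides as convergent operator power series in $u$ and to trace the single source of discrepancy to the non-commutativity of $A_{X}$ and $Q_{X}(t)$. Throughout I write $f(u)=uA_{X}-(1-t)Q_{X}(t)u^{2}$, so that $I-f(u)=I-uA_{X}+(1-t)Q_{X}(t)u^{2}$ is exactly the operator appearing in Proposition \ref{fC}; in particular it is invertible for $\lvert u \rvert < \frac{1}{\alpha(t)}$ and $(I-f(u))^{-1}$ is a convergent operator series there. I interpret $\log(I-f(u))$ as the operator power series $-\sum_{n=1}^{\infty}\frac{1}{n}f(u)^{n}$, which converges in operator norm for $u$ near $0$ (where $\lVert f(u)\rVert<1$, using the bound of Lemma \ref{at}); the resulting identity of holomorphic operator-valued functions then extends to the full disk $\lvert u \rvert < \frac{1}{\alpha(t)}$ by analytic continuation.

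First I would differentiate term by term. Since $f(u)$ is an operator polynomial in $u$, the non-commutative product rule gives $\frac{d}{du}f(u)^{n}=\sum_{k=0}^{n-1}f(u)^{k}f'(u)f(u)^{n-1-k}$, so that
\begin{align*}
-\frac{d}{du}\log(I-f(u))=\sum_{n=1}^{\infty}\frac{1}{n}\sum_{k=0}^{n-1}f(u)^{k}f'(u)f(u)^{n-1-k}.
\end{align*}
On the other hand, expanding the resolvent gives $f'(u)(I-f(u))^{-1}=\sum_{n=1}^{\infty}f'(u)f(u)^{n-1}$, and writing the $n$-th term as $\frac{1}{n}$ times $n$ identical copies I obtain
\begin{align*}
f'(u)(I-f(u))^{-1}=\sum_{n=1}^{\infty}\frac{1}{n}\sum_{k=0}^{n-1}f'(u)f(u)^{n-1}.
\end{align*}
Subtracting the two displays, the $n$-th summand collapses to $\frac{1}{n}\sum_{k=0}^{n-1}\big(f'(u)f(u)^{k}-f(u)^{k}f'(u)\big)f(u)^{n-1-k}=\frac{1}{n}\sum_{k=0}^{n-1}[f'(u),f(u)^{k}]f(u)^{n-1-k}$.

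The heart of the computation is the base commutator. A direct calculation with $f(u)=uA_{X}-(1-t)Q_{X}(t)u^{2}$ and $f'(u)=A_{X}-2(1-t)Q_{X}(t)u$ shows that the $uA_{X}^{2}$ and $u^{3}Q_{X}(t)^{2}$ contributions cancel, leaving
\begin{align*}
[f'(u),f(u)]=(1-t)u^{2}\big(A_{X}Q_{X}(t)-Q_{X}(t)A_{X}\big).
\end{align*}
Telescoping then gives $[f'(u),f(u)^{k}]=\sum_{i=0}^{k-1}f(u)^{i}[f'(u),f(u)]f(u)^{k-1-i}$, so every term in the difference carries exactly one copy of the commutator flanked by powers of $f(u)$ whose exponents sum to $n-2$.

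The last step is to collect these terms. Substituting the telescoped commutator into the difference produces a triple sum over $n$, $k$, $i$ subject to $0\le i\le k-1\le n-2$, whose generic term is $f(u)^{i}\big(A_{X}Q_{X}(t)-Q_{X}(t)A_{X}\big)f(u)^{n-2-i}$; for a fixed splitting $f(u)^{a}(\cdots)f(u)^{b}$ with $a=i$, $b=n-2-i$ and $a+b=n-2$, the number of admissible $k$ equals $b+1$. After the reindexing $j=b+1$ (so the left and right exponents become $n-1-j$ and $j-1$ and the weight becomes $j$) this yields
\begin{align*}
f'(u)(I-f(u))^{-1}+\frac{d}{du}\log(I-f(u))
&=(1-t)u^{2}\sum_{n=1}^{\infty}\frac{1}{n}\sum_{j=1}^{n-1}jf(u)^{n-1-j}\big(A_{X}Q_{X}(t)-Q_{X}(t)A_{X}\big)f(u)^{j-1},
\end{align*}
which is the claimed identity. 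I expect the main obstacle to be exactly this final bookkeeping: keeping the ordering of the flanking powers of $f(u)$ around the commutator consistent and counting the multiplicity correctly so that the weight comes out as $j$ rather than $n-j$; a secondary technical point is the careful justification of term-by-term differentiation and of convergence of all three series on the stated disk, which I would handle by first working where $\lVert f(u)\rVert<1$ and then invoking analyticity.
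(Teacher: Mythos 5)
Your proof is correct: the commutator computation $[f'(u),f(u)]=(1-t)u^{2}(A_{X}Q_{X}(t)-Q_{X}(t)A_{X})$, the telescoping of $[f'(u),f(u)^{k}]$, and the multiplicity count giving the weight $j=b+1$ all check out, and since $\lVert f(u)\rVert<1$ throughout $\lvert u\rvert<\frac{1}{\alpha(t)}$ (by the same estimate as in Lemma \ref{at}) even the analytic-continuation step is unnecessary. The paper itself states this proposition without proof, citing \cite{K2017}; your argument is the standard one used there, so there is nothing to flag.
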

Under the above preparation, we give the following theorem. 
\begin{theorem}\label{Ihara formula}
For $\left| u \right| < \frac{1}{\alpha(t)}, \left| t \right| <1$ and $x_{0}, x \in V\!X$, we have 
\begin{align*}
&Z_{X}(u, t, x_{0}, x)\\&=(1-(1-t)^2 u^2)^{-\frac{\degg(x_{0})-2}{2}\delta_{x_{0}}(x)}
		\exp\bigg(-\left[\log(I-u(D_{X}-\Delta_{X})+(1-t)Q_{X}(t)u^{2})\right](x_{0}, x)\bigg)\\&
			\quad \times\exp\bigg( \int_{0}^{u}(1-t)z^{2}\sum_{n=2}^{\infty}\frac{1}{n}
					\sum_{j=1}^{n-1}j\left[f(z)^{n-1-j}
					(A_{X}D_{X}-D_{X}A_{X})
					f(z)^{j-1}\right](x_{0}, x)dz \bigg)\\
			&\quad \times \exp\bigg( \frac{\big[tD_{X}-C_{2}(t)\big](x_{0}, x)}{2} (1-t)u^{2}\bigg)
				\exp\bigg(\sum_{m=3}^{\infty}\frac{(1-t)R_{m}(t)(x_{0}, x)}{m}u^{m}\bigg). 
\end{align*}
\end{theorem}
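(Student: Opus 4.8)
The plan is to take logarithms and reduce the claimed product formula to an additive identity, which I then verify by differentiating in $u$ and matching power series, using that both sides vanish at $u=0$ (there $\log Z_{X}=0$ and every factor is $1$). Writing $\log Z_{X}(u,t,x_{0},x)=\sum_{m\geq1}\frac1m C_{m}^{\cbc}(t)(x_{0},x)u^{m}$, Lemma \ref{at} guarantees that all the operator power series below converge in norm for $|u|<\frac1{\alpha(t)}$, so term-by-term differentiation is legitimate and gives $\frac{d}{du}\log Z_{X}(u,t,x_{0},x)=\frac1u\sum_{m\geq1}C_{m}^{\cbc}(t)(x_{0},x)u^{m}$. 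It therefore suffices to compute the operator generating function $\sum_{m\geq1}C_{m}^{\cbc}(t)u^{m}$ in closed form and match it, entrywise at $(x_{0},x)$, against the $u$-derivatives of the logarithms of the five factors on the right-hand side.

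First I would substitute the piecewise definition of $C_{m}^{\cbc}(t)$ and split $\sum_{m\geq1}C_{m}^{\cbc}(t)u^{m}$ into five pieces. Three are immediate: the principal part $\sum_{m\geq1}C_{m}(t)u^{m}$, the $m=2$ correction $-(1-t)C_{2}(t)u^{2}$ (which turns $C_{2}(t)$ into $tC_{2}(t)$), and the diagonal part $(1-t)\sum_{m\geq3}R_{m}(t)u^{m}$, whose $\frac1u$-multiple matches the last exponential factor's logarithmic derivative $\sum_{m\geq3}(1-t)R_{m}(t)(x_{0},x)u^{m-1}$ coefficient-by-coefficient. For the remaining two I would resum the inner $j$-sums as geometric series in $(1-t)^{2}u^{2}$: reindexing by $k=m-2j$ shows $-\frac{Q_{X}(t)-I(t)}{1-t}\sum_{j}(1-t)^{2j}C_{m-2j}(t)$ collapses to $-\frac{Q_{X}(t)-I(t)}{1-t}\cdot\frac{(1-t)^{2}u^{2}}{1-(1-t)^{2}u^{2}}\sum_{k\geq1}C_{k}(t)u^{k}$, and $-\delta_{2\Z}(m)(1-t)^{m-1}tD_{X}$ collapses to $-\frac{t(1-t)^{3}u^{4}}{1-(1-t)^{2}u^{2}}D_{X}$. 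Throughout I would use Proposition \ref{fC}, namely $\sum_{m\geq0}C_{m}(t)u^{m}=(1-(1-t)^{2}u^{2})(I-f(u))^{-1}$ (using $I-uA_{X}+(1-t)Q_{X}(t)u^{2}=I-f(u)$), to express every occurrence of $\sum C_{m}(t)u^{m}$ through $G:=(I-f(u))^{-1}$.

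The factor $\exp(-[\log(I-f(u))](x_{0},x))$ and the commutator integral are handled directly by Proposition \ref{f(u)}. Differentiating the logarithms of the second and third factors in $u$ gives $-[\frac{d}{du}\log(I-f(u))](x_{0},x)$ and $[(1-t)u^{2}\sum_{n\geq2}\frac1n\sum_{j=1}^{n-1}jf(u)^{n-1-j}(A_{X}D_{X}-D_{X}A_{X})f(u)^{j-1}](x_{0},x)$ respectively. Since $A_{X}Q_{X}(t)-Q_{X}(t)A_{X}=A_{X}D_{X}-D_{X}A_{X}$ and the $n=1$ term is empty, Proposition \ref{f(u)} shows their sum is exactly $[f'(u)(I-f(u))^{-1}](x_{0},x)$; this is precisely what neutralises the non-commutativity of $A_{X}$ and $Q_{X}(t)$, the commutator correction in the third factor cancelling the ordering defect of the operator logarithm.

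It then remains to check the operator identity $\frac1u\big(\sum_{m\geq1}C_{m}(t)u^{m}+(\text{geometric }C\text{-part})\big)=f'(u)(I-f(u))^{-1}+\frac{(1-t)u(D_{X}-2(1-t)I)}{1-(1-t)^{2}u^{2}}$. Using $f(u)=uA_{X}-u^{2}(1-t)Q_{X}(t)$, $f'(u)=A_{X}-2u(1-t)Q_{X}(t)$ and the algebraic simplification $Q_{X}(t)-I(t)=D_{X}-2(1-t)I$, a direct manipulation of $G$ gives $\frac1u\sum_{m\geq1}C_{m}(t)u^{m}=f'(u)G+u(1-t)(D_{X}-2(1-t)I)G$, while the geometric $C$-part supplies exactly $-u(1-t)(D_{X}-2(1-t)I)G+\frac{(1-t)u(D_{X}-2(1-t)I)}{1-(1-t)^{2}u^{2}}$, so the operator terms $u(1-t)(D_{X}-2(1-t)I)G$ cancel. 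Evaluating the surviving diagonal operators via $D_{X}(x_{0},x)=\degg(x_{0})\delta_{x_{0}}(x)$, I would finally verify that this rational remainder, together with $-(1-t)uC_{2}(t)$ and $-\frac{t(1-t)^{3}u^{3}}{1-(1-t)^{2}u^{2}}D_{X}$, assembles into exactly the derivatives $\frac{(\degg(x_{0})-2)(1-t)^{2}u}{1-(1-t)^{2}u^{2}}\delta_{x_{0}}(x)$ of the first factor and $(1-t)u[tD_{X}-C_{2}(t)](x_{0},x)$ of the fourth factor. The hard part will be exactly this last bookkeeping: each piece individually carries awkward rational-in-$u$ diagonal terms, and one must track the cancellations so that all off-diagonal content is carried solely by $f'(u)(I-f(u))^{-1}$ while the diagonal corrections collapse, the key numerical simplification being $(\degg(x_{0})-2(1-t))-(1-t)(\degg(x_{0})-2)=t\degg(x_{0})$, which is what produces the $\degg(x_{0})$-dependent scalar factor and the $tD_{X}$ contribution. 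Matching the vanishing constant terms at $u=0$ then completes the proof.
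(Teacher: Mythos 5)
Your proposal is correct and follows essentially the same route as the paper: both compute the generating function $\sum_{m\geq 1}C_{m}^{\cbc}(t)u^{m}$ from the definition, use Proposition \ref{fC} to reduce everything to $(I-f(u))^{-1}$, extract $uf'(u)(I-f(u))^{-1}$, apply Proposition \ref{f(u)} to split off the operator logarithm and the commutator integral, and then integrate after matching the residual diagonal rational terms. The only difference is bookkeeping --- you resum the inner $j$-sums as geometric series directly rather than regrouping with the prefactor $\frac{Q_{X}(t)}{1-t}$ as the paper does --- and your identified cancellations, including $(\degg(x_{0})-2(1-t))-(1-t)(\degg(x_{0})-2)=t\degg(x_{0})$, are exactly the ones the paper exploits.
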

\begin{proof}
We consider the following power series that converges in $\left| u \right| < \frac{1}{\alpha(t)}, \left| t \right| <1$. 
\begin{align*}
\sum_{m=0}^{\infty}C_{m}^{\cbc}(t)u^m. 
\end{align*}
By the definition of $C_{m}^{\cbc}(t)$, we have 
\begin{align*}
\sum_{m=0}^{\infty}C_{m}^{\cbc}(t)u^{m}
&=\frac{Q_{X}(t)}{1-t}\sum_{m=0}^{\infty}C_{m}(t)u^{m}
	-\frac{Q_{X}(t)-I(t)}{1-t}\sum_{m=0}^{\infty}\sum_{j=0}^{\lfloor \frac{m}{2} \rfloor}C_{m-2j}(t)(1-t)^{2j}u^{m}\\
	&\quad+\big( Q_{X}(t)-I(t) \big)\frac{(1-t)u^{2}}{1-(1-t)^{2}u^{2}}+(1-t)\sum_{m=3}^{\infty}R_{m}(t)u^{m}\\
	&\quad+C_{2}(t)(t-1)u^{2}-\frac{t(1-t)^{3}u^{4}}{1-(1-t)^{2}u^{2}}D_{X}.  
\end{align*}
Here, we denote the floor function by $\lfloor \cdot \rfloor$. 
In the right hand side of the above equation, the following equality holds. 
\begin{align*}
&\big( Q_{X}(t)-I(t) \big)\frac{(1-t)u^{2}}{1-(1-t)^{2}u^{2}}
-\frac{t(1-t)^{3}u^{4}}{1-(1-t)^{2}u^{2} }D_{X}\\
&=t(1-t)u^{2}D_{X}+\frac{(1-t)^{2}u^{2}}{1-(1-t)^{2}u^{2}}(Q_{X}-I). 
\end{align*}
Hence, we have  
\begin{align*}
\sum_{m=0}^{\infty}C_{m}^{\cbc}(t)u^{m}
&=\frac{Q_{X}(t)}{1-t}\sum_{m=0}^{\infty}C_{m}(t)u^{m}
    -\frac{Q_{X}(t)-I(t)}{1-t}\sum_{m=0}^{\infty}\sum_{j=0}^{\lfloor \frac{m}{2} \rfloor}C_{m-2j}(t)(1-t)^{2j}u^{m}\\
	&\quad+(Q_{X}-I)\frac{(1-t)^{2}u^{2}}{1-(1-t)^{2}u^{2}}+(1-t)u^{2}\big(tD_{X}-C_{2}(t)\big)\\
	&\quad+(1-t)\sum_{m=3}^{\infty}R_{m}(t)u^{m}. 
\end{align*}
By this, we have
\begin{align*}
\sum_{m=1}^{\infty}C_{m}^{\cbc}(t)u^{m}
&=\frac{Q_{X}(t)}{1-t}\sum_{m=0}^{\infty}C_{m}(t)u^{m}-I
    -\frac{Q_{X}(t)-I(t)}{1-t}\sum_{m=0}^{\infty}\sum_{j=0}^{\lfloor \frac{m}{2} \rfloor}C_{m-2j}(t)(1-t)^{2j}u^{m}\\
	&\quad+(Q_{X}-I)\frac{(1-t)^{2}u^{2}}{1-(1-t)^{2}u^{2}}+(1-t)u^{2}\big(tD_{X}-C_{2}(t)\big)\\
	&\quad+(1-t)\sum_{m=3}^{\infty}R_{m}(t)u^{m}. 
\end{align*}
By Proposition \ref{fC}, we have 
\begin{align*}
&\frac{Q_{X}(t)}{1-t}\sum_{m=0}^{\infty}C_{m}(t)u^{m}-I
    -\frac{Q_{X}(t)-I(t)}{1-t}\sum_{m=0}^{\infty}\sum_{j=0}^{\lfloor \frac{m}{2} \rfloor}C_{m-2j}(t)(1-t)^{2j}u^{m}\\
&=\frac{Q_{X}(t)}{1-t}\big(1-(1-t)^{2}u^{2}\big)(I-f(u))^{-1}-I-\frac{Q_{X}(t)-I(t)}{1-t}(I-f(u))^{-1}\\
&=u f'(u)(I-f(u))^{-1}. 
\end{align*}
By Proposition \ref{f(u)}, we have 
\begin{align*}
&\frac{Q_{X}(t)}{1-t}\sum_{m=0}^{\infty}C_{m}(t)u^{m}-I
    -\frac{Q_{X}(t)-I(t)}{1-t}\sum_{m=0}^{\infty}\sum_{j=0}^{\lfloor \frac{m}{2} \rfloor}C_{m-2j}(t)(1-t)^{2j}u^{m}\\
&=-u\frac{d}{du}\log(I-f(u))\\
	&\quad+(1-t)u^{3}\sum_{n=1}^{\infty}\frac{1}{n}\sum_{j=1}^{n-1}jf(u)^{n-1-j}
	\big(A_{X}Q_{X}(t)-Q_{X}(t)A_{X}\big)f(u)^{j-1}. 
\end{align*}
Therefore, for $x_{0}, x \in V\!X$, we have 
\begin{align*}
&u\frac{d}{du}\sum_{m=1}^{\infty}\frac{C_{m}^{\cbc}(t)(x_{0}, x)}{m}u^{m}\\
&=-\bigg[u\frac{d}{du}\log(I-f(u))\bigg](x_{0}, x)\\
	&\quad+(1-t)u^{3}\sum_{m=1}^{\infty}\frac{1}{n}\sum_{j=1}^{n-1}j 
	\big[ f(u)^{n-1-j}\big(A_{X}Q_{X}(t)-Q_{X}(t)A_{X}\big)f(u)^{j-1}\big](x_{0}, x)\\
	&\quad-\frac{\degg(x_{0})-2}{2}\delta_{x_{0}}(x) u \frac{d}{du}\big[\log(1-(1-t)^{2}u^{2})\big]
		+\frac{\big[tD_{X}-C_{2}(t)\big](x_{0}, x)}{2} u \frac{d}{du}\big[(1-t)u^{2}\big]\\
	&\quad+u\frac{d}{du}\sum_{m=3}^{\infty}\frac{(1-t)R_{m}(t)(x_{0}, x)}{m}u^{m}. 
\end{align*}

Dividing by $u$ and integrating from $0$ to $u$, we have 
\begin{align*}
&\sum_{m=1}^{\infty}\frac{C_{m}^{\cbc}(t)(x_{0}, x)}{m}u^{m}\\
&=-\bigg[\log(I-f(u))\bigg](x_{0}, x_{0})\\
&\quad+\int_{0}^{u}(1-t)z^{2}\sum_{m=1}^{\infty}\frac{1}{n}
	\sum_{j=1}^{n-1}j \big[ f(z)^{n-1-j}\big(A_{X}Q_{X}(t)-Q_{X}(t)A_{X}\big)f(z)^{j-1}\big](x_{0}, x)dz\\
&\quad-\frac{\degg(x_{0})-2}{2}\delta_{x_{0}}(x)\log(1-(1-t)^{2}u^{2})
+\frac{\big[tD_{X}-C_{2}(t)\big](x_{0}, x)}{2} (1-t)u^{2}\\
&\quad+\sum_{m=3}^{\infty}\frac{(1-t)R_{m}(t)(x_{0}, x)}{m}u^{m}. 
\end{align*}
This implies the following identity 
\begin{align*}
&Z_{X}(u, t, x_{0}, x)\\&=(1-(1-t)^2 u^2)^{-\frac{\degg(x_{0})-2}{2}\delta_{x_{0}}(x)}\\
		&\quad \times 
			\exp\bigg(-\left[\log(I-u(D_{X}-\Delta_{X})+(1-t)Q_{X}(t)u^{2})\right](x_{0}, x)\bigg)\\&
			\quad \times\exp\bigg( \int_{0}^{u}(1-t)z^{2}\sum_{n=2}^{\infty}\frac{1}{n}
					\sum_{j=1}^{n-1}j\left[f(z)^{n-1-j}
					(A_{X}D_{X}-D_{X}A_{X})
					f(z)^{j-1}\right](x_{0}, x)dz \bigg)\\
			&\quad \times \exp\bigg( \frac{\big[tD_{X}-C_{2}(t)\big](x_{0}, x)}{2} (1-t)u^{2}\bigg)
				\exp\bigg(\sum_{m=3}^{\infty}\frac{(1-t)R_{m}(t)(x_{0}, x)}{m}u^{m}\bigg). 
\end{align*}
\end{proof}

If $X$ is a $(q+1)$-regular graph, we have 
\begin{align*}
I-(D_{X}-\Delta_{X})u+(1-t)Q_{X}(t)u^{2}=I-\big((q+1)I-\Delta_{X}\big)u+(1-t)(q+t)u^{2}I. 
\end{align*}
Since $\Delta_{X}$ is a self-adjoint bounded operator, there exists a unique spectral measure $E$ such that 
\begin{align*}
\Delta_{X}=\int_{\sigma(\Delta_{X})}\lambda dE(\lambda). 
\end{align*}
Here, we denote the spectrum of the Laplacian $\Delta_{X}$ by $\sigma( \Delta_{X})$. 
By Theorem \ref{Ihara formula} and the property of the spectral integral, we have 
\begin{align*}
Z_{X}(u, t, x_{0}, x)
&=(1-(1-t)^{2}u^{2})^{-\frac{q-1}{2}\delta_{x_{0}}(x)}\\
&\quad \times \exp\bigg( \int_{\sigma(\Delta_{X})}-\log(1-(q+1-\lambda)u+(1-t)(q+t)u^{2})d\mu_{x_{0}, x}(\lambda)\bigg)\\
&\quad \times \exp\bigg(\frac{\big[tD_{X}-C_{2}(t)\big](x_{0}, x)}{2}(1-t)u^{2}\bigg)\\
&\quad\times \exp\bigg( \sum_{m=3}^{\infty}\frac{(1-t)R_{m}(t)(x_{0}, x)}{m}u^{m}\bigg). 
\end{align*}
Here, we denote $d\langle E(\lambda)\delta_{x_{0}}, \delta_{x_{0}} \rangle$ by $d\mu_{x_{0}, x_{0}}(\lambda)$. 
Hence, we get the following corollary. 
\begin{corollary}\label{Ihara formula for a regular graph}
For $\left| u \right| < \frac{1}{\alpha(t)}$, $\left| t \right| <1$, we have 
\begin{align*}
Z_{X}(u, t, x_{0}, x)
&=(1-(1-t)^{2}u^{2})^{-\frac{q-1}{2}\delta_{x_{0}}(x)}\\
&\quad\times \exp\bigg( \int_{\sigma(\Delta_{X})}-\log(1-(q+1-\lambda)u+(1-t)(q+t)u^{2})d\mu_{x_{0}, x}(\lambda)\bigg)\\
&\quad \times \exp\bigg(\frac{\big[tD_{X}-C_{2}(t)\big](x_{0}, x)}{2}(1-t)u^{2}\bigg)\\
&\quad\times \exp\bigg( \sum_{m=3}^{\infty}\frac{(1-t)R_{m}(t)(x_{0}, x)}{m}u^{m}\bigg). 		
\end{align*}
\end{corollary}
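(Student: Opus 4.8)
The plan is to specialize the general Bartholdi-type formula of Theorem~\ref{Ihara formula} to a $(q+1)$-regular graph and then rewrite the operator logarithm occurring there as a spectral integral, using the functional calculus for the self-adjoint Laplacian $\Delta_{X}$.

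First I would record the simplifications forced by regularity. Since every vertex has degree $q+1$ we have $D_{X}=(q+1)I$ and $\degg(x_{0})=q+1$, so the prefactor exponent $-\frac{\degg(x_{0})-2}{2}\delta_{x_{0}}(x)$ becomes $-\frac{q-1}{2}\delta_{x_{0}}(x)$. The decisive point is that $D_{X}$ is now a scalar multiple of the identity, hence commutes with $A_{X}$; therefore $A_{X}D_{X}-D_{X}A_{X}=0$ identically, and the entire second exponential factor of Theorem~\ref{Ihara formula}, namely the one built from $f(z)^{n-1-j}(A_{X}D_{X}-D_{X}A_{X})f(z)^{j-1}$, collapses to $\exp(0)=1$ and drops out. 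The two remaining factors, coming from $tD_{X}-C_{2}(t)$ and from $R_{m}(t)$, are carried over without change.

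It remains to treat the logarithmic factor. Using $D_{X}-\Delta_{X}=A_{X}=(q+1)I-\Delta_{X}$ and $(1-t)Q_{X}(t)=(1-t)(q+t)I$, the operator inside the logarithm becomes a scalar function of $\Delta_{X}$:
\begin{align*}
I-u(D_{X}-\Delta_{X})+(1-t)Q_{X}(t)u^{2}=g_{u,t}(\Delta_{X}),\qquad g_{u,t}(\lambda)=1-(q+1-\lambda)u+(1-t)(q+t)u^{2}.
\end{align*}
In the proof of Theorem~\ref{Ihara formula} this logarithm was defined through a power series that converges for $\lvert u\rvert<\frac{1}{\alpha(t)}$, and, since $\Delta_{X}$ is self-adjoint with spectral resolution $\Delta_{X}=\int_{\sigma(\Delta_{X})}\lambda\,dE(\lambda)$, that same series is computed by the Borel functional calculus as $\log g_{u,t}(\Delta_{X})=\int_{\sigma(\Delta_{X})}\log g_{u,t}(\lambda)\,dE(\lambda)$. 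Taking the $(x_{0},x)$ matrix element against $d\mu_{x_{0},x}(\lambda)=d\langle E(\lambda)\delta_{x_{0}},\delta_{x}\rangle$ then gives
\begin{align*}
-\big[\log\big(I-u(D_{X}-\Delta_{X})+(1-t)Q_{X}(t)u^{2}\big)\big](x_{0},x)=\int_{\sigma(\Delta_{X})}-\log g_{u,t}(\lambda)\,d\mu_{x_{0},x}(\lambda),
\end{align*}
which is precisely the integral factor in the corollary. Multiplying the four factors together yields the asserted identity.

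The only point needing care is this last conversion from the operator logarithm to the spectral integral: one must confirm that the power series defining $\log(I-f(u))$ in Theorem~\ref{Ihara formula} coincides with the functional-calculus expression $\int\log g_{u,t}(\lambda)\,dE(\lambda)$. This is immediate once one notes that $f(u)=\big(u(q+1)-(1-t)(q+t)u^{2}\big)I-u\Delta_{X}$ is a normal operator (a complex polynomial in the self-adjoint $\Delta_{X}$) and that $\lVert f(u)\rVert<1$ under the hypotheses $\lvert u\rvert<\frac{1}{\alpha(t)}$, $\lvert t\rvert<1$ by Lemma~\ref{at}, so that the series $-\sum_{k\geq1}f(u)^{k}/k$ and the spectral integral represent the same bounded operator; the passage to matrix elements is then routine. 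The rest of the argument is bookkeeping.
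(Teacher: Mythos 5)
Your proposal is correct and follows essentially the same route as the paper: specialize Theorem \ref{Ihara formula} to the $(q+1)$-regular case, observe that $D_{X}=(q+1)I$ kills the commutator factor $A_{X}D_{X}-D_{X}A_{X}$, and convert the operator logarithm of the now-scalar polynomial in $\Delta_{X}$ into a spectral integral against $d\mu_{x_{0},x}$. The only difference is cosmetic: you make explicit the vanishing of the commutator term and the identification of the power-series logarithm with the functional-calculus logarithm, both of which the paper leaves implicit under the phrase ``the property of the spectral integral.''
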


Moreover, we discuss the case that $X$ is a finite $(q+1)$-regular graph. 
We introduce the notion of the local spectrum (\cite{Fiol}). 
For a vertex $x \in V\!X$, we denote $x$-local multiplicity of $\lambda_{i}$ by $\m_{x}(\lambda_{i})$. 
Here, the {\it$x$-local multiplicity} of $\lambda_{i}$ is the $xx$-entry of the primitive idempotent $E_{\lambda_{i}}$. Let $\{\mu_{0}=\lambda_{0}, \mu_{1}, \dots, \mu_{d_{x}}\}$ be the set of eigenvalues whose local multiplicities are positive. 
For each vertex $x \in V\!X$, we denote the $x$-local spectrum by $\sigma_{x}(X)$. 
Here, the {\it $x$-local spectrum} is $\sigma_{x}(X)=\{\lambda_{0}^{\m_{x}(\lambda_{0})}, \mu_{1}^{\m_{x}(\mu_{1})}, \dots, \mu_{d_{x}}^{\m_{x}(\mu_{d_{x}})}\}$. 

Then, we have the following corollary immediately by Corollary \ref{Ihara formula for a regular graph}. 
\begin{corollary}
For $\left| u \right| < \frac{1}{\alpha(t)}$, $\left| t \right| <1$, we have 
\begin{align*}
Z_{X}(t, u, x_{0}, x)
&=(1-(1-t)^{2}u^{2})^{-\frac{q-1}{2}\delta_{x_{0}}(x)}\\
	&\quad \prod_{\lambda \in \sigma_{x_{0}}(\Delta_{X})}
	\big(1-(q+1-\lambda)u+(1-t)(q+t)u^2\big)^{-m_{x_{0}}(\lambda)}\\
	&\quad \times \exp\bigg(\frac{\big[tD_{X}-C_{2}(t)\big](x_{0}, x)}{2}(1-t)u^{2}\bigg)\\
	&\quad \times\exp\bigg(\sum_{m=3}^{\infty}\frac{(1-t)R_{m}(t)(x_{0}, x)}{m}u^m\bigg). 
\end{align*}
\end{corollary}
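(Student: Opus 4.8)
The plan is to obtain this corollary as an immediate specialization of Corollary \ref{Ihara formula for a regular graph} to the finite setting, the only change being that the spectral integral collapses to a finite product. All other factors are inherited verbatim.

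First I would invoke finiteness of $X$ to pass from the abstract spectral calculus to an atomic sum. Since $X$ is finite, $\ell^{2}(V\!X)$ is finite-dimensional and the self-adjoint operator $\Delta_{X}$ has a finite spectrum $\sigma(\Delta_{X})=\{\lambda_{0},\dots\}$, so its spectral resolution $\Delta_{X}=\int_{\sigma(\Delta_{X})}\lambda\,dE(\lambda)$ reduces to $\Delta_{X}=\sum_{i}\lambda_{i}E_{\lambda_{i}}$, where each $E_{\lambda_{i}}$ is the orthogonal projection onto the $\lambda_{i}$-eigenspace, i.e. the primitive idempotent. Hence the operator-valued measure $dE$ is purely atomic, placing the weight $E_{\lambda_{i}}$ at each eigenvalue $\lambda_{i}$, and therefore the scalar measure $d\mu_{x_{0},x_{0}}(\lambda)=d\langle E(\lambda)\delta_{x_{0}},\delta_{x_{0}}\rangle$ is atomic with mass $\langle E_{\lambda_{i}}\delta_{x_{0}},\delta_{x_{0}}\rangle=(E_{\lambda_{i}})_{x_{0}x_{0}}$ at $\lambda_{i}$. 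By the definition of local multiplicity recalled above (cf.~\cite{Fiol}), this mass is exactly $\m_{x_{0}}(\lambda_{i})$, and the eigenvalues carrying positive mass are precisely the points of the local spectrum $\sigma_{x_{0}}(X)$.

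Next I would substitute this atomic measure into the integral factor of Corollary \ref{Ihara formula for a regular graph}. With $g(\lambda)=-\log\big(1-(q+1-\lambda)u+(1-t)(q+t)u^{2}\big)$, the integral $\int_{\sigma(\Delta_{X})}g(\lambda)\,d\mu_{x_{0},x_{0}}(\lambda)$ becomes the finite sum $\sum_{\lambda\in\sigma_{x_{0}}(X)}\m_{x_{0}}(\lambda)\,g(\lambda)$, and exponentiating converts this sum of logarithms into the finite product $\prod_{\lambda\in\sigma_{x_{0}}(\Delta_{X})}\big(1-(q+1-\lambda)u+(1-t)(q+t)u^{2}\big)^{-\m_{x_{0}}(\lambda)}$, valid in the common domain $|u|<\tfrac{1}{\alpha(t)}$, $|t|<1$ where the series defining $Z_{X}(u,t,x_{0},x)$ converges. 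The three remaining factors — the prefactor $(1-(1-t)^{2}u^{2})^{-\frac{q-1}{2}\delta_{x_{0}}(x)}$, the Gaussian-type term $\exp\big(\tfrac{[tD_{X}-C_{2}(t)](x_{0},x)}{2}(1-t)u^{2}\big)$, and the tail $\exp\big(\sum_{m=3}^{\infty}\tfrac{(1-t)R_{m}(t)(x_{0},x)}{m}u^{m}\big)$ — do not involve the spectral integral and are copied unchanged, which yields the asserted identity.

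There is essentially no genuine obstacle here; the content is the translation between the analytic spectral-measure formalism of Corollary \ref{Ihara formula for a regular graph} and the combinatorial local-spectrum language. The one point demanding care is precisely the identification in the first step: one must check that the atom weight appearing is the diagonal entry $(E_{\lambda_{i}})_{x_{0}x_{0}}$, i.e. the local multiplicity $\m_{x_{0}}(\lambda_{i})$, which is exactly why the product is naturally indexed by $\sigma_{x_{0}}(X)$ rather than by the full spectrum; this is also the reason the statement is most cleanly read on the diagonal $x=x_{0}$, consistent with the definition $d\mu_{x_{0},x_{0}}(\lambda)=d\langle E(\lambda)\delta_{x_{0}},\delta_{x_{0}}\rangle$ used in Corollary \ref{Ihara formula for a regular graph}.
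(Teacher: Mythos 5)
Your proposal is correct and matches the paper's intent: the paper derives this corollary "immediately" from Corollary \ref{Ihara formula for a regular graph} by exactly the specialization you describe, namely collapsing the spectral integral to an atomic sum weighted by the local multiplicities $(E_{\lambda})_{x_{0}x_{0}}=\m_{x_{0}}(\lambda)$ and exponentiating to get the finite product over $\sigma_{x_{0}}(\Delta_{X})$. Your remark that the product as written (indexed by the diagonal local spectrum) really corresponds to the case $x=x_{0}$ is a fair and accurate observation about the statement itself.
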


\section{The Euler product expression}
In this section, we give the Euler product expression of the Bartholdi zeta function which is introduced in Section $4$. 
We have to introduce several terminologies to give the Euler product expression. 
We take a vertex $x_{0}$. 
A closed path $C$ starting at $x_{0}$ is {\it primitive} if there is no closed paths starting at $x_{0}$ whose length is shorter than $\ell(C)$ and of which the multiple is $C$. 
We denote by $\mathcal{PK}_{x_{0}}$ the set of primitive closed paths starting at $x_{0}$. 
Then, the following theorem holds. 
\begin{theorem}
For $\left| u \right| < \frac{1}{\alpha(t)}$, $\left| t \right| <1$, we have 
\begin{align*}
Z_{X}(t, u, x_{0})=\prod_{C \in \mathcal{PK}_{x_{0}}}(1-t^{\cbc(C)}u^{\ell(C)})^{-\frac{1}{\ell(C)}}. 
\end{align*}
\end{theorem}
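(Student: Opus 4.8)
The plan is to derive the Euler product directly from the logarithmic definition
\begin{align*}
Z_{X}(t, u, x_{0})=\exp\bigg( \sum_{C \in \mathcal{C}_{x_{0}}}\frac{1}{\ell(C)}t^{\cbc(C)}u^{\ell(C)} \bigg)
\end{align*}
by regrouping the sum over all closed paths according to their primitive roots, exactly as in the classical Ihara setting. The backbone is the bijection
\begin{align*}
\mathcal{C}_{x_{0}} \longleftrightarrow \mathcal{PK}_{x_{0}} \times \Z_{\geq 1}, \qquad C \longmapsto (P, k),
\end{align*}
where $C = P^{k}$ is the unique expression of a closed path $C$ as a power of a primitive closed path $P$ starting at $x_{0}$. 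Granting this together with the two multiplicativity identities $\ell(P^{k}) = k\,\ell(P)$ and $\cbc(P^{k}) = k\,\cbc(P)$, I would substitute into the defining sum and compute
\begin{align*}
\sum_{C \in \mathcal{C}_{x_{0}}} \frac{t^{\cbc(C)}u^{\ell(C)}}{\ell(C)}
= \sum_{P \in \mathcal{PK}_{x_{0}}} \frac{1}{\ell(P)} \sum_{k=1}^{\infty} \frac{1}{k}\bigl(t^{\cbc(P)}u^{\ell(P)}\bigr)^{k}
= -\sum_{P \in \mathcal{PK}_{x_{0}}} \frac{1}{\ell(P)} \log\bigl(1 - t^{\cbc(P)}u^{\ell(P)}\bigr),
\end{align*}
using $\sum_{k\geq 1} z^{k}/k = -\log(1-z)$. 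Exponentiating converts the sum over $P$ into the claimed product.

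The two multiplicativity identities are the first thing to pin down. Length is additive, so $\ell(P^{k}) = k\,\ell(P)$ is immediate. For the cyclic bump count, writing $P = (e_{1},\dots,e_{p})$ with indices in $\Z/p\Z$, the edge sequence of $P^{k}$ is $p$-periodic of length $kp$. A position $jp + r$ with $1 \leq r \leq p-1$ is a cyclic bump of $P^{k}$ exactly when $e_{r} = \overline{e_{r+1}}$, while each of the $k$ junction positions is a bump exactly when $e_{p} = \overline{e_{1}}$. Since $\cbc(P)$ already counts the wrap-around index $r = p$ that tests $e_{p} = \overline{e_{1}}$, summing over the $k$ periods yields $\cbc(P^{k}) = k\,\cbc(P)$.

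I expect the main obstacle to be the unique factorization statement, as it is the genuine combinatorial heart of the theorem. The cleanest route is to read $C = (e_{1},\dots,e_{n})$ as a word over the alphabet $E\!X$ and invoke the uniqueness of the primitive root of a word: one writes $C = v^{k}$ with $v$ not a proper power and $k$ maximal, both $v$ and $k$ being unique. One then checks that $v$ is itself a closed path starting at $x_{0}$: its origin is $o(e_{1}) = x_{0}$, and for $k \geq 2$ its terminus satisfies $t(v) = o(e_{|v|+1}) = o(e_{1}) = x_{0}$ by $|v|$-periodicity (while for $k=1$ one has $v = C$, already closed). Finally I would verify that primitivity of $v$ as a word coincides with primitivity as a closed path in the sense defined above, since a proper power decomposition of either kind, $v = w'^{j}$ with $j \geq 2$, forces $w'$ to be a shorter closed path at $x_{0}$ and hence a proper power of the other kind. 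This establishes the bijection.

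To legitimize the manipulations I would record the convergence once at the end: for $\left| t \right| < 1$ and $\left| u \right| < 1/\alpha(t) \leq 1/M$ the number of closed paths of length $m$ starting at $x_{0}$ is at most $M^{m}$ and $\left| t^{\cbc(C)} \right| \leq 1$, so
\begin{align*}
\sum_{C \in \mathcal{C}_{x_{0}}} \frac{\left| t \right|^{\cbc(C)}\left| u \right|^{\ell(C)}}{\ell(C)} \leq \sum_{m=1}^{\infty} \frac{M^{m}\left| u \right|^{m}}{m} < \infty.
\end{align*}
This absolute convergence validates the regrouping by primitive roots, the interchange of the exponential with the resulting infinite product, and the convergence of the product itself, thereby completing the proof.
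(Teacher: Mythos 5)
Your proposal is correct and follows essentially the same route as the paper: both rest on the unique decomposition of a based closed path as a power of a primitive one, the multiplicativity of $\ell$ and $\cbc$ under taking powers, and the expansion $-\log(1-z)=\sum_{k\geq 1}z^{k}/k$, with the paper merely running the computation from the product side (truncated at length $N$ and passing to the limit) rather than from the sum side. You actually supply details the paper leaves implicit, namely the verification of $\cbc(P^{k})=k\,\cbc(P)$, the word-combinatorial uniqueness of the primitive root, and the absolute-convergence bound justifying the rearrangement.
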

\begin{proof}
For $\left| u \right| < \frac{1}{\alpha(t)}$, $\left| t \right| <1$ and $N \in \Z_{\geq 1}$, we have 
\begin{align*}
\log \prod_{C \in \mathcal{PK}_{x_{0}}, \ell(C) \leq N}(1-t^{\cbc(C)}u^{\ell(C)})^{-\frac{1}{\ell(C)}}
&= -\sum_{C \in \mathcal{PK}_{x_{0}}, \ell(C) \leq N} \frac{1}{\ell(C)}\log(1-t^{\cbc(C)}u^{\ell(C)})\\
&= \sum_{C \in \mathcal{PK}_{x_{0}}, \ell(C) \leq N}\sum_{m=1}^{\infty}\frac{1}{\ell(C)m}t^{\cbc(C)m}u^{\ell(C)m} \\
&= \sum_{C \in \mathcal{PK}_{x_{0}}, \ell(C) \leq N}\sum_{m=1}^{\infty}\frac{1}{\ell(C^{m})}t^{\cbc(C^{m})}u^{\ell(C^{m})}\\
&= \sum_{C} \frac{1}{\ell(C)}t^{\cbc(C)}u^{\ell(C)}. 
\end{align*}
Here, the last sum runs through the set of closed paths starting at $x_{0}$ such that the length of primitive paths is less than or equal to $N$. 
Therefore, for any $N$, we have, 
\begin{align*}
\exp\bigg(  \sum_{C} \frac{1}{\ell(C)}t^{\cbc(C)}u^{\ell(C)} \bigg)
=\prod_{C \in \mathcal{PK}_{x_{0}}, \ell(C) \leq N}(1-t^{\cbc(C)}u^{\ell(C)})^{-\frac{1}{\ell(C)}}
\end{align*}
Here, the sum runs through the same set as the above. 
Taking the limit of the both sides, we have 
\begin{align*}
Z_{X}(t, u, x_{0})=\prod_{C \in \mathcal{PK}_{x_{0}}}(1-t^{\cbc(C)}u^{\ell(C)})^{-\frac{1}{\ell(C)}}. 
\end{align*}
\end{proof}

\section{The heat kernels on regular graphs}
In this section, for a regular graph, we give a new expression of the heat kernels on regular graphs by using the modified Bessel function of the first kind. 
Let $X$ be a $(q+1)$-regular graph. 
We denote the heat kernel of $X$ by $K_{X}(\tau, x_{0}, x)$. 
For $j \in \Z_{\geq 0}$ and real variable $t$ which satisfies $\left| t \right| <1$, we define the symbol $d_{j}(t)$ as follows. 
\begin{align*}
d_{j}(t)=\left \{ \begin{array}{ll}
			1	& \text{if $j=0$}, \\
			-\frac{q-1+2t}{1-t}	& \text{if $j\geq 1$}.\\
			\end{array} \right. 
\end{align*}
Then, the following theorem holds. 
\begin{theorem}\label{heat kernel}
For $\tau \in \R_{\geq 0}$, $x \in V\!X$ and $\left| t \right| <1$, we have 
\begin{align*}
K_{X}(\tau, x_{0}, x)&=\sum_{n=0}^{\infty}C_{n}(t)(x_{0}, x)
			\sum_{j=0}^{\infty}d_{j}(t) \e^{-(q+1)\tau} \\
			&\quad \times (1-t)^{2j}\big((1-t)(q+t)\big)^{-\frac{n+2j}{2}}
			\I_{n+2j}\big(2\sqrt{(1-t)(q+t)}\tau\big). 
\end{align*}
\end{theorem}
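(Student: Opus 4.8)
The plan is to verify that the stated series satisfies the heat equation together with the initial condition, and then invoke the uniqueness theorem quoted in Section 2.3 to conclude that it coincides with $K_{X}(\tau, x_{0}, x)$. The two things I must check are: first, that the proposed function $f(\tau, x)$ satisfies $f(0, x) = \delta_{x_{0}}(x)$; second, that it satisfies $(\Delta_{X} + \frac{\partial}{\partial \tau})f(\tau, x) = 0$; and throughout I must keep track of the boundedness needed to apply uniqueness. The bound (\ref{Bbound}) on $\I_{n}(\tau)$ together with Lemma \ref{at}, which controls $\lVert C_{n}(t) \rVert \leq \alpha(t)^{n}$, should give absolute and uniform convergence on $[0, T] \times V\!X$, so all termwise differentiation and reordering below will be justified and the uniqueness hypothesis will be met.

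For the initial condition, I would set $\tau = 0$ and use $\I_{k}(0) = 0$ for $k \geq 1$ while $\I_{0}(0) = 1$. Thus in the double sum only the terms with $n + 2j = 0$ survive, forcing $n = j = 0$; since $C_{0}(t) = I$ so that $C_{0}(t)(x_{0}, x) = \delta_{x_{0}}(x)$ and $d_{0}(t) = 1$, the whole expression collapses to $\delta_{x_{0}}(x)$, as required.

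For the heat equation, the main computational engine will be the Bessel recursion (\ref{Bf}), which lets me rewrite $\frac{\partial}{\partial \tau}$ acting on each $\I_{n+2j}\big(2\sqrt{(1-t)(q+t)}\tau\big)$ as a combination of neighboring Bessel functions, and the recursion for $C_{n}(t)$ from Proposition \ref{Cm}, which expresses $A_{X}$ applied to the $C_{n}(t)$ coefficients in terms of $C_{n\pm 1}(t)$ and $C_{n-2}(t)$. Since $X$ is $(q+1)$-regular we have $D_{X} = (q+1)I$ and $Q_{X}(t) = (q+1-(1-t))I = (q+t)I$, so Proposition \ref{Cm} reads $C_{n}(t) = C_{n-1}(t)C_{1}(t) - (1-t)(q+t)C_{n-2}(t)$; applying both sides to $\delta_{x_{0}}$ and pairing with $\delta_{x}$ turns the action of $\Delta_{X} = (q+1)I - A_{X}$ on the spatial variable into a shift on the index $n$. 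The strategy is to plug the series into $\Delta_{X} f + \partial_{\tau} f$, use these two recursions to collect the coefficient of each basis function $C_{n}(t)(x_{0},x)\,\I_{n+2j}(\cdots)\e^{-(q+1)\tau}$, and show that every such coefficient vanishes. The scalars $d_{j}(t)$ are precisely the weights designed to make the $j$-summation telescope.

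The hard part will be the bookkeeping in the double sum: after applying (\ref{Bf}) and the $C_{n}(t)$-recursion, the index shifts in $n$ and the index shifts in $j$ must be realigned so that contributions sharing the same pair $(n, j)$ can be compared, and this is where the constant $-\frac{q-1+2t}{1-t}$ in $d_{j}(t)$ for $j \geq 1$ enters to cancel the leftover boundary terms coming from the $n=0, 1$ edge of the recursion and from the $j=0$ edge. I expect the factors $(1-t)^{2j}$ and $\big((1-t)(q+t)\big)^{-(n+2j)/2}$ to combine exactly with the coefficient $(1-t)(q+t)$ appearing in Proposition \ref{Cm} so that the homogeneous interior terms cancel identically, leaving only the edge terms whose vanishing pins down the value of $d_{j}(t)$. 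Once every coefficient is shown to vanish and the convergence bounds are in place, uniqueness of the bounded solution of the heat equation finishes the proof.
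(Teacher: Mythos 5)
Your proposal is correct and follows essentially the same route as the paper: verify the initial condition via $\I_{0}(0)=1$, $\I_{k}(0)=0$, establish boundedness on $[0,T]\times V\!X$ from (\ref{Bbound}) and Lemma \ref{at}, verify the equation by combining the Bessel recursion (\ref{Bf}) with the regular-graph form of Proposition \ref{Cm} (the paper first substitutes $g=\e^{(q+1)\tau}f$ to reduce to $\partial_{\tau}g=C_{1}(t)g$, which is the same bookkeeping you describe), and conclude by uniqueness of the bounded solution. Just note that the $n=2$ case of the recursion carries the constant $(1-t)(q+1)$ rather than $(1-t)(q+t)$, which is one of the edge terms your $d_{j}(t)$ cancellation must absorb, exactly as in the paper's computation.
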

\begin{proof}
We define $g(\tau, x)$ as follows. 
\begin{align*}
g(\tau, x)=\e^{(q+1)\tau}f(\tau, x). 
\end{align*}
Then, it turns out that the heat equation is equivalent to the following equation. 
\begin{align*}
\left \{ \begin{array}{ll}
	\frac{\partial g}{\partial \tau}(\tau, x)-C_{1}(t)g(\tau, \cdot)(x)=0 ,\\
	g(0, x)=\delta_{x_{0}}(x). 
	\end{array} \right. 
\end{align*}
Here, we remark that $C_{1}(t)$ is equal to $A_{X}$. 
It is sufficient to show that the following is the solution of the above equation. 
\begin{align*}
g(\tau, x)&=\sum_{n=0}^{\infty}C_{n}(t)(x_{0}, x)
			\sum_{j=0}^{\infty}d_{j}(t)\\
			&\quad \times (1-t)^{2j}\big((1-t)(q+t)\big)^{-\frac{n+2j}{2}}
			\I_{n+2j}\big(2\sqrt{(1-t)(q+t)}\tau\big).
\end{align*}
It is obvious that $g(0, x)=\delta_{x_{0}}(x)$. 
Therefore, it remains to check that $g(\tau, x)$ is bounded on $[0, T] \times V\!X$ for each $T$ and $g(\tau, x)$ satisfies the above equation indeed. 

First, we check that $g(\tau, x)$ is bounded. 
By Proposition \ref{Cm} and $(\ref{Bbound})$, we have 
\begin{align*}
\left| g(\tau, x) \right| \leq \sum_{n=0}^{\infty} \alpha(t)^{n}
	\sum_{j=0}^{\infty}\left| d_{j}(t) \right| \left| 1-t \right|^{2j} 
	\tau^{n+2j}\frac{\e^{2\sqrt{(q+t)(1-t)}\tau}}{(n+2j)!}. 
\end{align*}
We denote the maximum of $d_{j}(t)$ by $M_{t}$. 
Then, we have 
\begin{align*}
\left| g(\tau, x) \right| 
&\leq M_{t} \e^{2\sqrt{(q+t)(1-t)}\tau} 
\sum_{n=0}^{\infty}\alpha(t)^{n}\sum_{j=0}^{\infty}\frac{\tau^{n}(2\tau)^{2j}}{(n+2j)!}\\
&\leq M_{t} \e^{2\sqrt{(q+t)(1-t)}\tau}
\sum_{n=0}^{\infty}\frac{\big(\alpha(t)\tau\big)^{n}}{n!}
\sum_{j=0}^{\infty}\frac{(2\tau)^{2j}}{(2j)!}\\
&= M_{t} \e^{(2\sqrt{(q+t)(1-t)}+\alpha(t))\tau}\cosh (2\tau). 
\end{align*}
Therefore, $g(\tau, x)$ is bounded on $[0, T] \times V\!X$ for each $T$. 

Second, we check that $g(\tau, x)$ satisfies the equation. 
To prove this, we define $g(\tau)$  as follows. 
\begin{align*}
g(\tau)&=\sum_{n=0}^{\infty}C_{n}(t)
			\sum_{j=0}^{\infty}d_{j}(t)\\
			&\quad \times (1-t)^{2j}\big((1-t)(q+t)\big)^{-\frac{n+2j}{2}}
			\I_{n+2j}\big(2\sqrt{(1-t)(q+t)}\tau\big).
\end{align*}
Then, 
\begin{align*}
&\frac{\partial g}{\partial \tau}(\tau, x)-C_{1}(t)g(\tau, \cdot)(x)\\
&=\bigg\{ \frac{\partial g}{\partial \tau}(\tau)-C_{1}(t)g(\tau)\bigg\}(x_{0}, x). 
\end{align*}
Therefore, it is sufficient to check that 
\begin{align*}
\frac{\partial g}{\partial \tau}(\tau)-C_{1}(t)g(\tau)=0. 
\end{align*}
\begin{align*}
&\frac{\partial g}{\partial \tau}(\tau)-C_{1}(t)g(\tau)\\
&=\sum_{n=0}^{\infty}C_{n}(t)
	\sum_{j=0}^{\infty}d_{j}(t)(1-t)^{2j}\big((1-t)(q+t)\big)^{-\frac{n+2j-1}{2}}
	\I_{n+2j-1}\big(2\sqrt{(1-t)(q+t)}\tau \big)\\
& \quad +\sum_{n=0}^{\infty}C_{n}(t)
	\sum_{j=0}^{\infty}d_{j}(t)(1-t)^{2j}\big((1-t)(q+t)\big)^{-\frac{n+2j-1}{2}}
	\I_{n+2j+1}\big(2\sqrt{(1-t)(q+t)}\tau \big)\\
& \quad -\sum_{n=1}^{\infty}C_{1}(t)C_{n-1}(t)\\
	& \quad \quad \quad \times \sum_{j=0}^{\infty}d_{j}(t)
			 (1-t)^{2j}\big((1-t)(q+t)\big)^{-\frac{n-1+2j}{2}}
			\I_{n+2j-1}\big(2\sqrt{(1-t)(q+t)}\tau\big). 
\end{align*}
Here, we used (\ref{Bf}) and we remark that we are allowed to change the order of the differentiation and power series by (\ref{Bbound}). 
By Proposition \ref{Cm}, we have 
\begin{align*}
C_{n}(t)=\left \{ \begin{array}{ll}
			C_{1}(t)^{2}-(1-t)(q+1)I		& \text{if $n=2$}, \\
			C_{1}(t)C_{n-1}(t)-(1-t)(q+t)C_{n-2}(t)	& \text{if $n\geq 3$}.\\
			\end{array} \right. 
\end{align*}
Here, we remark that the operator $C_{n}(t)$ is a self-adjoint operator. 
By this relation, we have 
\begin{align*}
&\sum_{n=1}^{\infty}C_{1}(t)C_{n-1}(t)\\
	&\quad \times \sum_{j=0}^{\infty}d_{j}(t)
			 (1-t)^{2j}\big((1-t)(q+t)\big)^{-\frac{n-1+2j}{2}}
			\I_{n+2j-1}\big(2\sqrt{(1-t)(q+t)}\tau\big)\\
&=C_{1}(t)\sum_{j=0}^{\infty}d_{j}(t)
			 (1-t)^{2j}\big((1-t)(q+t)\big)^{-j}
			\I_{2j}\big(2\sqrt{(1-t)(q+t)}\tau\big)\\
	&\quad+C_{2}(t)\sum_{j=0}^{\infty}d_{j}(t)
			 (1-t)^{2j}\big((1-t)(q+t)\big)^{-\frac{1+2j}{2}}
			\I_{2j+1}\big(2\sqrt{(1-t)(q+t)}\tau\big)\\
	&\quad+(1-t)(q+1)\sum_{j=0}^{\infty}d_{j}(t)
			 (1-t)^{2j}\big((1-t)(q+t)\big)^{-\frac{1+2j}{2}}
			\I_{2j+1}\big(2\sqrt{(1-t)(q+t)}\tau\big)\\
	&\quad+\sum_{n=3}^{\infty}C_{n}(t)\sum_{j=0}^{\infty}d_{j}(t)
			 (1-t)^{2j}\big((1-t)(q+t)\big)^{-\frac{n-1+2j}{2}}
			\I_{n+2j-1}\big(2\sqrt{(1-t)(q+t)}\tau\big)\\
	&\quad+\sum_{n=1}^{\infty}C_{n}(t)\\
	&\quad \quad \quad \times \sum_{j=0}^{\infty}d_{j}(t)(1-t)^{2j}
			 \big((1-t)(q+t)\big)^{-\frac{n+2j-1}{2}}
			\I_{n+2j+1}\big(2\sqrt{(1-t)(q+t)}\tau\big). 
\end{align*}
To explain our calculation clearly, we put 
\begin{align*}
&(n=1)=C_{1}(t)\sum_{j=0}^{\infty}d_{j}(t)
			 (1-t)^{2j}\big((1-t)(q+t)\big)^{-j}
			\I_{2j}\big(2\sqrt{(1-t)(q+t)}\tau\big), \\
&(n=2-1)=C_{2}(t)\sum_{j=0}^{\infty}d_{j}(t)
			 (1-t)^{2j}\big((1-t)(q+t)\big)^{-\frac{1+2j}{2}}
			\I_{2j+1}\big(2\sqrt{(1-t)(q+t)}\tau\big), \\
&(n=2-2)=(1-t)(q+1)\sum_{j=0}^{\infty}d_{j}(t)
			 (1-t)^{2j}\big((1-t)(q+t)\big)^{-\frac{1+2j}{2}}
			\I_{2j+1}\big(2\sqrt{(1-t)(q+t)}\tau\big), \\
&(n=3-1)=\sum_{n=3}^{\infty}C_{n}(t)\sum_{j=0}^{\infty}d_{j}(t)
			 (1-t)^{2j}\big((1-t)(q+t)\big)^{-\frac{n-1+2j}{2}}
			\I_{n+2j-1}\big(2\sqrt{(1-t)(q+t)}\tau\big), \\
&(n=3-2)=\sum_{n=1}^{\infty}C_{n}(t)
	\sum_{j=0}^{\infty}d_{j}(t)(1-t)^{2j}
			 \big((1-t)(q+t)\big)^{-\frac{n+2j-1}{2}}
			\I_{n+2j+1}\big(2\sqrt{(1-t)(q+t)}\tau\big). 
\end{align*}
By calculating $(n=1)+(n=2-1)+(n=3-1)$, 
\begin{align*}
&\sum_{n=1}^{\infty}C_{1}(t)C_{n-1}(t)\\
	&\quad \times \sum_{j=0}^{\infty}d_{j}(t)
			 (1-t)^{2j}\big((1-t)(q+t)\big)^{-\frac{n-1+2j}{2}}
			\I_{n+2j-1}\big(2\sqrt{(1-t)(q+t)}\tau\big)\\
&=\sum_{n=1}^{\infty}C_{n}(t)\sum_{j=0}^{\infty}d_{j}(t)
			 (1-t)^{2j}\big((1-t)(q+t)\big)^{-\frac{n-1+2j}{2}}
			\I_{n+2j-1}\big(2\sqrt{(1-t)(q+t)}\tau\big)\\
	&\quad+(n=2-2)+(n=3-2). 
\end{align*}
Then, we have 
\begin{align*}
&\frac{\partial g}{\partial \tau}(\tau)-C_{1}(t)g(\tau)\\ \displaybreak[3]
&=\sum_{j=0}^{\infty}d_{j}(t)(1-t)^{2j}\big((1-t)(q+t)\big)^{-\frac{2j-1}{2}}
	\I_{2j-1}\big(2\sqrt{(1-t)(q+t)}\tau\big)\\
	&\quad+\sum_{n=0}^{\infty}C_{n}(t)
	\sum_{j=0}^{\infty}d_{j}(t)(1-t)^{2j}\big((1-t)(q+t)\big)^{-\frac{n+2j-1}{2}}
	\I_{n+2j+1}\big(2\sqrt{(1-t)(q+t)}\tau \big)\\
	&\quad -(n=2-2)-(n=3-2)\\
&=\sum_{j=0}^{\infty}d_{j}(t)(1-t)^{2j}\big((1-t)(q+t)\big)^{-\frac{2j-1}{2}}
	\I_{2j-1}\big(2\sqrt{(1-t)(q+t)}\tau\big)\\
	&\quad+\sum_{j=0}^{\infty}d_{j}(t)(1-t)^{2j}\big((1-t)(q+t)\big)^{-\frac{2j-1}{2}}
	\I_{2j+1}\big(2\sqrt{(1-t)(q+t)}\tau \big)-(n=2-2)\\
&=\sum_{j=0}^{\infty}d_{j}(t)(1-t)^{2j}\big((1-t)(q+t)\big)^{-\frac{2j-1}{2}}
	\I_{2j-1}\big(2\sqrt{(1-t)(q+t)}\tau\big)\\
	&\quad+\sum_{j=0}^{\infty}d_{j}(t)(1-t)^{2j}\bigg(1-\frac{q+1}{q+t}\bigg)\big((1-t)(q+t)\big)^{-\frac{2j-1}{2}}
	\I_{2j+1}\big(2\sqrt{(1-t)(q+t)}\tau \big)\\
&=\sum_{j=0}^{\infty}d_{j}(t)(1-t)^{2j}\big((1-t)(q+t)\big)^{-\frac{2j-1}{2}}
	\I_{2j-1}\big(2\sqrt{(1-t)(q+t)}\tau\big)\\
	&\quad-\sum_{j=0}^{\infty}d_{j}(t)(1-t)^{2j}\bigg(\frac{1-t}{q+t}\bigg)\big((1-t)(q+t)\big)^{-\frac{2j-1}{2}}
	\I_{2j+1}\big(2\sqrt{(1-t)(q+t)}\tau \big). 
\end{align*}
Here, we note that $C_{0}(t)=I$. 
Therefore, we have 
\begin{align*}
\frac{\partial g}{\partial \tau}(\tau)-C_{1}(t)g(\tau)=0. 
\end{align*}
Here, we used $\I_{-1}(\tau)=\I_{1}(\tau)$. 
This completes the proof. 
\end{proof}

\section{An alternative proof of the Bartholdi zeta function formula}
In this section, we give an alternative proof of the Bartholdi zeta function formula for a regular graph obtained in Section $4$.  

Since $\Delta_{X}$ is a self-adjoint bounded operator, there exists a unique spectral measure $E$ such that 
\begin{align*}
\Delta_{X}=\int_{\sigma(\Delta_{X})}\lambda dE(\lambda). 
\end{align*}
Here, $\sigma(\Delta_{X})$ stands for the spectrum of $\Delta_{X}$. 
Therefore, for $x_{0}, x \in V\!X$, we have 
\begin{align*}
K_{X}(\tau, x_{0}, x)=\int_{\sigma(\Delta)}\e^{-\tau \lambda} d\mu_{x_{0}, x}(\lambda). 
\end{align*}
Here, we denote $d\langle E(\lambda) \delta_{x_{0}}, \delta_{x} \rangle$ by $d\mu_{x_{0}, x}(\lambda)$. 
By applying the $G(t)$-transform and by easy culculation, for $0 < u < \frac{1}{\alpha(t)}$, we have 
\begin{align*}
G(t)\big(K_{X}(\tau, x_{0}, x)\big)(u)
&=G(t)\bigg( \int_{\sigma(\Delta)}\e^{-\tau \lambda} d\mu_{x_{0}, x}(\lambda) \bigg)(u) \\
&=\int_{\sigma(\Delta_{X})}\frac{u^{-2}-(q+t)(1-t)}{(q+t)(1-t)u+\frac{1}{u}-(q+1-\lambda)}d \mu_{x_{0}, x}(\lambda). 
\end{align*}
We remark that we are allowed to change the order of integrations in the above equation by Fubini's theorem. 
We also remark that if $0 < u < \frac{1}{\alpha(t)}$, then we have 
\begin{align*}
(q+t)(1-t)u+\frac{1}{u}-(q+1-\lambda)>0. 
\end{align*}
On the other hand, by Theorem \ref{heat kernel}, we have 
\begin{align*}
G(t)\big(K_{X}(\tau, x_{0}, x)\big)(u)
&=G(t)\bigg( \sum_{n=0}^{\infty}C_{n}(t)(x_{0}, x)
			\sum_{j=0}^{\infty}d_{j}(t) \e^{-(q+1)\tau} \\
			&\quad \times (1-t)^{2j}\big((1-t)(q+t)\big)^{-\frac{n+2j}{2}}
			\I_{n+2j}\big(2\sqrt{(1-t)(q+t)}\tau\big) \bigg)(u) \\
&=\sum_{n=0}^{\infty}C_{n}(t)(x_{0}, x)\sum_{j=0}^{\infty}d_{j}(t)(1-t)^{2j}u^{n+2j-1}. 
\end{align*}
Here, we used (\ref{G(t)f}) in the second equation. 
Therefore, we have 
\begin{align*}
G(t)(K_{X}(\tau, x_{0}, x))(u)
&=\sum_{n=0}^{\infty}C_{n}(x_{0}, x)u^{n-1}\\
&\quad-\frac{q-1+2t}{1-t}\sum_{n=2}^{\infty}
	\sum_{j=1}^{\lfloor \frac{n}{2} \rfloor}C_{n-2j}(t)(x_{0}, x)(1-t)^{2j}u^{n-1}. 
\end{align*}
We note that the following equality holds. 
\begin{align*}
\sum_{n=2}^{\infty}\sum_{j=1}^{\lfloor \frac{n}{2} \rfloor}C_{n-2j}(t)(x_{0}, x)(1-t)^{2j}u^{n}
&=\sum_{n=3}^{\infty}\sum_{j=1}^{\lceil \frac{n}{2} \rceil-1}(1-t)^{2j}C_{n-2j}(t)(x_{0}, x)u^{n}\\
	&\quad+C_{0}(t)(x_{0}, x)\frac{(1-t)^{4}u^{4}}{1-(1-t)^{2}u^{2}}+C_{0}(t)(x_{0}, x)(1-t)^{2}u^{2}. 
\end{align*}
Hence, we have 
\begin{align*}
&G(t)(K_{X}(\tau, x_{0}, x))(u)\\
&=\sum_{n=3}^{\infty}C_{n}(t)(x_{0}, x)u^{n-1}
	-\frac{q-1+2t}{1-t}\sum_{n=3}^{\infty}\sum_{j=1}^{\lceil \frac{n}{2} \rceil-1}
	(1-t)^{2j}C_{n-2j}(t)(x_{0}, x)u^{n-1}\\
	&\quad+\frac{1}{u}\big(C_{0}(t)(x_{0}, x)+C_{1}(t)(x_{0}, x)u+tC_{2}(t)(x_{0}, x)u^{2}\big)\\
	&\quad+u(1-t)C_{2}(t)(x_{0}, x)-(q-1+2t)\frac{(1-t)u}{1-(1-t)^{2}u^{2}}C_{0}(t)(x_{0}, x). 
\end{align*}
By the definition of $C_{m}^{\cbc}(t)$, we have 
\begin{align*}
&G(t)\big(K_{X}(\tau, x_{0}, x) \big)(u)\\
&=\frac{1}{u}C_{0}(t)(x_{0}, x)
+\frac{1}{u}\sum_{n=1}^{\infty}\big(C_{n}^{\cbc}(t)(x_{0}, x)-(1-t)R_{m}(t)(x_{0}, x)\big)u^{m}\\
&\quad+(1-t)u\big(C_{2}(t)-tD_{X}\big)(x_{0}, x)-(q-1)\frac{(1-t)^{2}u}{1-(1-t)^{2}u^{2}}C_{0}(t)(x_{0}, x). 
\end{align*}
Therefore, we have 
\begin{align*}
&\int_{\sigma(\Delta_{X})}\frac{u^{-2}-(q+t)(1-t)}{(q+t)(1-t)u+\frac{1}{u}-(q+1-\lambda)}d \mu_{x_{0}, x}(\lambda)\\
&=\frac{1}{u}C_{0}(t)(x_{0}, x)
+\frac{1}{u}\sum_{n=1}^{\infty}\big(C_{n}^{\cbc}(t)(x_{0}, x)-(1-t)R_{m}(t)(x_{0}, x)\big)u^{m}\\
&\quad+(1-t)u\big(C_{2}(t)-tD_{X}\big)(x_{0}, x)-(q-1)\frac{(1-t)^{2}u}{1-(1-t)^{2}u^{2}}C_{0}(t)(x_{0}, x). 
\end{align*}
This is equivalent to the following equation. 
\begin{align*}
&\frac{d}{du}\bigg\{\frac{q-1}{2}C_{0}(t)\log(1-(1-t)^{2}u^{2})
+\sum_{n=1}^{\infty}\frac{C_{n}^{\cbc}(t)-(1-t)R_{n}(t)}{n}u^{n}\\
&\quad+\frac{C_{2}(t)-tD_{X}}{2}(1-t)u^{2}\bigg\}(x_{0}, x)\\
&=\frac{d}{du} \int_{\sigma(\Delta_{X})}
	-\log\big(1-(q+1-\lambda)u+(1-t)(q+t)u^{2} \big) d\mu_{x_{0}, x}(\lambda)
\end{align*}
By integrating both sides from $0$ to $u$ and determining the integrating constant, we have 
\begin{align*}
&(1-(1-t)^{2})^{\frac{q-1}{2}\delta_{x_{0}}(x)}Z_{X}(u, t, x_{0}, x) 
\exp\bigg(-\sum_{n=3}^{\infty}\frac{(1-t)R_{m}(t)(x_{0}, x)}{m}\bigg)\\
&\quad \times \exp\bigg(\frac{\big[C_{2}(t)-tD_{X}\big](x_{0}, x)}{2}(1-t)u^{2}\bigg)\\
&=\exp\bigg(\int_{\sigma(\Delta_{X})}-\log\big(1-(q+1-\lambda)u+(1-t)(q+t)u^{2} \big) d\mu_{x_{0}, x}(\lambda)\bigg). 
\end{align*}
Therefore, we have
\begin{align*}
Z_{X}(u, t, x_{0}, x)
&=(1-(1-t)^{2}u^{2})^{-\frac{q-1}{2}\delta_{x_{0}}(x)}\\
&\quad\times \exp\bigg( \int_{\sigma(\Delta_{X})}-\log(1-(q+1-\lambda)u+(1-t)(q+t)u^{2})d\mu_{x_{0}, x}(\lambda)\bigg)\\
&\quad \times \exp\bigg(\frac{\big[tD_{X}-C_{2}(t)\big](x_{0}, x)}{2}(1-t)u^{2}\bigg)\\
&\quad \times \exp\bigg( \sum_{m=3}^{\infty}\frac{(1-t)R_{m}(t)(x_{0}, x)}{m}u^{m}\bigg). 		
\end{align*}
This gives an alternative proof of the Bartholdi zeta function formula obtained in Section $4$. 

\section*{Acknowledgment}
The author expresses gratitude to Professor Hiroyuki Ochiai for his many helpful comments.

\end{document}